\def\bu{\bullet}
\def\marker{\>\hbox{${\vcenter{\vbox{
    \hrule height 0.4pt\hbox{\vrule width 0.4pt height 6pt
    \kern6pt\vrule width 0.4pt}\hrule height 0.4pt}}}$}\>}
\def\gpic#1{#1
%     \midinsert\centerline{\box\graph}\endinsert }
     \smallskip\par\noindent{\centerline{\box\graph}} \medskip}
\newtheorem{theorem}{Theorem}[section]
\theoremstyle{definition}
\theoremstyle{plain}
\newtheorem{conjecture}[theorem]{Conjecture}
\newtheorem{question}[theorem]{Question}
\newtheorem{cor}[theorem]{Corollary}
\newtheorem{prop}[theorem]{Proposition}
\newcommand\p{\mathcal{P}}
\def\st{\colon\,}
\def\esub{\subseteq}
\def\FR{\frac}
\def\FL#1{\lfloor{#1}\rfloor}
\def\CL#1{\lceil{#1}\rceil}
\def\RR{{\mathbb R}}
\def\NN{{\mathbb N}}
\def\Knnt{K_{\CL{n/2},\FL{n/2}}}
\def\nosub{\not\esub}
\def\VEC#1#2#3{#1_{#2},\ldots,#1_{#3}}
\def\CH{\binom}
\def\SE#1#2#3{\sum_{#1=#2}^{#3}}
\author{Sarah J. Loeb\thanks{University of Illinois, Urbana, IL,
sloeb2@illinois.edu.  Research supported in part by a gift from Gene H. Golub
to the Mathematics Department of the University of Illinois.} and
Douglas B.~West\thanks{Zhejiang Normal University, Jinhua, China, and
University of Illinois, Urbana, IL, west@math.uiuc.edu.
Research supported by Recruitment Program of Foreign Experts,
1000 Talent Plan, State Administration of Foreign Experts Affairs, China.
}}
\title{Fractional and Circular Separation Dimension of Graphs}
\date{\today}
\begin{document}
\maketitle

\vspace{-2pc}

\begin{abstract}
The {\it separation dimension} of a graph $G$, written $\pi(G)$, is the
minimum number of linear orderings of $V(G)$ such that every two nonincident
edges are ``separated'' in some ordering, meaning that both endpoints of one
edge appear before both endpoints of the other.  We introduce the
{\it fractional separation dimension} $\pi_f(G)$, which is the minimum of
$a/b$ such that some $a$ linear orderings (repetition allowed) separate every
two nonincident edges at least $b$ times.

In contrast to separation dimension, fractional separation dimension is 
bounded: always $\pi_f(G)\le 3$, with equality if and only if $G$ contains
$K_4$.  There is no stronger bound even for bipartite graphs, since
$\pi_f(K_{m,m})=\pi_f(K_{m+1,m})=\FR{3m}{m+1}$.  We also compute $\pi_f(G)$ for
cycles and some complete tripartite graphs.  We show that $\pi_f(G)<\sqrt 2$
when $G$ is a tree and present a sequence of trees on which the value tends to
$4/3$.

Finally, we consider analogous problems for circular orderings, where pairs
of nonincident edges are separated unless their endpoints alternate.  Let
$\pi^\circ(G)$ be the number of circular orderings needed to separate all pairs
and $\pi_f^\circ(G)$ be the fractional version.  Among our results:
(1) $\pi^\circ(G)=1$ if and only $G$ is outerplanar.
(2) $\pi^\circ(G)\le2$ when $G$ is bipartite.
(3) $\pi^\circ(K_n)\ge\log_2\log_3(n-1)$.
(4) $\pi_f^\circ(G)\le\FR32$, with equality if and only if $K_4\esub G$.
(5) $\pi_f^\circ(K_{m,m})=\FR{3m-3}{2m-1}$.
%Conjectures include (1) when $n=3m$ the $K_4$-free $n$-vertex graph maximizing
%$\pi_f(G)$ is $K_{m,m,m}$, and (2) $\pi_f(G)\le2$ when $G$ has no cycle of
%length less than $5$.
\end{abstract}

\baselineskip 16pt

\section{Introduction}

A pair of nonincident edges in a graph $G$ is \emph{separated} by a linear
ordering of $V(G)$ if both vertices of one edge precede both vertices of the
other. The \emph{separation dimension} $\pi(G)$ of a graph $G$ is the minimum
number of vertex orderings that together separate every pair of nonincident
edges of $G$.  Graphs with at most three vertices have no such pairs, so
their separation dimension is $0$.  We therefore consider only graphs with
at least four vertices.

Introduced by Basavaraju (B), Chandran (C), Golumbic (G), Mathew (M), and
Rajendraprasad (R)~\cite{BCGMR} (full version in~\cite{BCGMR2}),
separation dimension is motivated by a geometric interpretation.
By viewing the orderings as giving coordinates for each vertex, the separation
dimension is the least $k$ such that the vertices of $G$ can be embedded in
$\mathbb{R}^k$ so that any two nonincident edges of $G$ are separated by a
hyperplane perpendicular to some coordinate axis (ties in a coordinate may be
broken arbitrarily.)

The upper bounds on $\pi(G)$ proved by BCGMR~\cite{BCGMR,BCGMR2} include
$\pi(G)\le3$ when $G$ is planar (sharp for $K_4$) and $\pi(G)\le4\log_{3/2}n$
when $G$ has $n$ vertices.  Since all pairs needing separation continue to need
separation when other edges are added, $\pi(G)\le \pi(H)$ when $G\esub H$;
call this fact {\it monotonicity}.  By monotonicity, the complete graph $K_n$
achieves the maximum among $n$-vertex graphs.  In general,
$\pi(G)\ge\log_2\FL{\FR12\omega(G)}$, where $\omega(G)=\max\{t\st K_t\esub G\}$.
This follows from the lower bound $\pi(K_{m,n}) \ge \log_2
\min\{m,n\}$~\cite{BCGMR,BCGMR2} and monotonicity.  Hence the growth rate of
$\pi(K_n)$ is logarithmic.  (For the {\it induced separation dimension}, 
introduced in GMR~\cite{GMR}, the only pairs needing separation are those whose
vertex sets induce exactly two edges, and monotonicity does not hold.)
%$2\chi_a(G)+13.68\log(\chi_a(G))$ where $\chi_a$ is acyclic chromatic number,
%$\chi_s(G)+13.68\log(\chi_s(G))$ where $\chi_s$ is star chromatic number.

BCMR~\cite{BCMR} proved $\pi(G) \in O(k \log \log n)$ for the $n$-vertex graphs
$G$ in which every subgraph has a vertex of degree at most $k$.  Letting $K'_n$
denote the graph produced from $K_n$ by subdividing every edge, they also showed
$\pi(K'_n)\in\Theta(\log\log n)$.  Thus separation dimension is unbounded
already on the family of graphs with average degree less than $4$.
%\[\FR12\FL{\log\log(n-1)} \le \pi(K_n') \le (1+o(1)) \log \log (n-1).\] 
In terms of the maximum vertex degree $\Delta(G)$, Alon and BCMR~\cite{ABCMR}
proved $\pi(G)\le 2^{9\log_2^* \Delta(G)}\Delta(G)$.  They also proved that
almost all $d$-regular graphs $G$ satisfy $\pi(G) \ge \lceil d/2 \rceil$.

Separation dimension is equivalently the restriction of another parameter to
the special case of line graphs.  The {\it boxicity} of a graph $G$, written
${\rm box}(G)$, is the least $k$ such that $G$ can be represented by assigning
each vertex an axis-parallel box in $\RR^k$ (that is, a cartesian product of
$k$ intervals) so that vertices are adjacent in $G$ if and only if their
assigned boxes intersect.  The initial paper~\cite{BCGMR} observed that
$\pi(G)={\rm box}(L(G))$, where $L(G)$ denotes the line graph of $G$ (including
when $G$ is a hypergraph).

We study a fractional version of separation dimension, using techniques that
apply for hypergraph covering problems in general.  Given a hypergraph $H$, the
{\it covering number} $\tau(H)$ is the minimum number of edges in $H$ whose
union is the full vertex set.  For separation dimension $\pi(G)$, the vertex
set of $H$ is the set of pairs of nonincident edges in $G$, and the edges of
$H$ are the sets of pairs separated by a single ordering of $V(G)$.  Many
minimization problems, including chromatic number, domination, poset dimension,
and so on, can be expressed in this way.

Given a hypergraph covering problem, the corresponding fractional problem
considers the difficulty of covering each vertex multiple times and measures
the average number of edges needed.  In particular, the {\it $t$-fold covering
number} $\tau_t(H)$ is the least number of edges in a list of edges (repetition
allowed) that covers each vertex at least $t$ times, and the {\it fractional
covering dimension} is $\liminf_t \tau_t(H)/t$.  In the special case that $H$
is the hypergraph associated with separation dimension, we obtain the
{\it $t$-fold separation dimension} $\pi_t(G)$ and the {\it fractional
separation dimension} $\pi_f(G)$.

Every list of $s$ edges in a hypergraph $H$ provides an upper bound on
$\tau_f(H)$; if it covers each vertex at least $t$ times, then it is called
an {\it $(s:t)$-covering}, and $\tau_f(H)\le s/t$.  This observation suffices
to determine the maximum value of the fractional separation dimension.  It is
bounded, even though the separation dimension is not (recall
$\pi(K_n)\ge \log\FL{n/2}$).

\begin{theorem}\label{bound}
$\pi_f(G)\le3$ for any graph $G$, with equality when $K_4\esub G$.
\end{theorem}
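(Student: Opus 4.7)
The plan is to prove both directions by a symmetric counting argument together with the monotonicity of $\pi_f$.

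For the upper bound, I would first observe that $\pi_f$ inherits monotonicity from $\pi$: any $(s:t)$-covering of $G$ restricts to an $(s:t)$-covering of any subgraph $H$ (orderings of $V(G)$ restrict to orderings of $V(H)$, and nonincident pairs in $H$ remain nonincident in $G$). So it suffices to bound $\pi_f(K_n)$ for all $n$. The key computation is to take the list of \emph{all} $n!$ linear orderings of $V(K_n)$ and show, by symmetry, that every pair of nonincident edges is separated the same number of times. Given a fixed pair $\{ab,cd\}$, the probability that a uniformly random ordering separates it equals the fraction of the $4!=24$ permutations of $\{a,b,c,d\}$ in which $\{a,b\}$ both precede $\{c,d\}$ or vice versa; a direct count gives $8/24 = 1/3$. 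Thus each pair is separated by exactly $n!/3$ of the orderings, yielding an $(n!:n!/3)$-covering and $\pi_f(K_n) \le 3$.

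For the lower bound, I would show $\pi_f(K_4) \ge 3$ directly and then invoke monotonicity. The graph $K_4$ has exactly three pairs of nonincident edges, corresponding to the three perfect matchings $\{12,34\}$, $\{13,24\}$, $\{14,23\}$. The crucial observation is that \emph{any} linear ordering of the four vertices separates exactly one of these three pairs: if the ordering is $wxyz$, only the pair $\{wx,yz\}$ has one edge entirely preceding the other. Therefore, in any $(s:t)$-covering of $K_4$, the total number of separations summed over the three pairs is at most $s$, but it must be at least $3t$ since each pair needs coverage $t$. So $s\ge 3t$ and $\pi_f(K_4)\ge 3$; monotonicity then gives $\pi_f(G)\ge 3$ whenever $K_4\esub G$.

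Neither step presents a real obstacle — the fractional framework makes both the averaging upper bound and the counting lower bound very clean. The one point that merits explicit verification is the monotonicity of $\pi_f$, since the excerpt states monotonicity only for $\pi$; but the argument is immediate from the definition of an $(s:t)$-covering, so this is a one-line observation rather than a difficulty.
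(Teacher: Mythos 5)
Your proposal is correct and follows essentially the same argument as the paper: the upper bound comes from averaging over all orderings (each nonincident pair is separated by $8$ of the $24$ arrangements of its four vertices, hence with probability $\frac13$), and the lower bound from the fact that every ordering separates exactly one of the three nonincident pairs in a copy of $K_4$. The detour through monotonicity (reducing to $K_n$ and $K_4$) is a cosmetic repackaging rather than a different route --- the paper applies both counts directly in $G$, and it notes separately that monotonicity does hold for $\pi_f$.
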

\begin{proof}
We may assume $|V(G)|\ge4$, since otherwise there are no separations to be
established and $\pi_f(G)\le\pi(G)=0$.  Now consider the set of all linear
orderings of $V(G)$.  For any two nonincident edges $ab$ and $cd$, consider
fixed positions of the other $n-4$ vertices in a linear ordering.  There are
$24$ such orderings, and eight of them separate $ab$ and $cd$.  Grouping the
orderings into such sets shows that $ab$ and $cd$ are separated $n!/3$ times.
Hence $\pi_f(G)\le 3$.

Now suppose $K_4\esub G$.  In a copy of $K_4$ there are three pairs of 
nonincident edges, and every linear ordering separates exactly one of them.
Hence to separate each at least $t$ times, $3t$ orderings must be used.
We obtain $\pi_t(G)\ge 3t$ for all $t$, so $\pi(G)\ge 3$.
\end{proof}

When $G$ is disconnected, the value on $G$ of $\pi_t$ for any $t$ (and hence
also the value of $\pi_f$) is just its maximum over the components of $G$.  We
therefore focus on connected graphs.  Also monotonicity holds for $\pi_f$
just as for $\pi$.
%Although adding edges cannot reduce the fractional separation dimension, it can
%be lower when there are more edges and more nonincident pairs of edges.  For
%example, our results yield $\pi_f(C_8)=4/3$ and $\pi_f(K_4) = 3$. 

Fractional versions of hypergraph covering problems are discussed in the book
of Scheinerman and Ullman~\cite{SU}.  For every hypergraph covering problem,
the fractional covering number is the solution to the linear programming
relaxation of the integer linear program specifying $\tau(H)$.  One can use
this to express $\tau_f(G)$ in terms of a matrix game; we review this
transformation in Section~\ref{sec:equivalent} to make our presentation
self-contained.  The resulting game yields a strategy for proving results about
$\tau_f(H)$ and in particular about $\pi_f(G)$.

In Section~\ref{sec:small_girth}, we characterize the extremal graphs for
fractional separation dimension, proving that $\pi_f(G)=3$ only when
$K_4\esub G$.  No smaller bound can be given even for bipartite graphs;
we prove $\pi_f(K_{m,m})=\FR{3m}{m+1}$.

In Sections~\ref{sec:larger_girth} and~\ref{sec:trees} we consider sparser
graphs.  The {\it girth} of a graph is the minimum length of its cycles
(infinite if it has no cycles).  In Section~\ref{sec:larger_girth} we show
$\pi_f(C_n)=\FR n{n-2}$.  Also, the value is $\FR{30}{17}$ for the Petersen
graph and $\FR{28}{17}$ for the Heawood graph.  Although these results
suggested asking whether graphs with fixed girth could admit better bounds on
separation number, Alon~\cite{Alon} pointed out by using expander graphs that
large girth does not permit bounding $\pi_f(G)$ by any constant less than $3$
(see Section~\ref{sec:larger_girth}).  Nevertheless, we can still ask the
question for planar graphs.

\begin{question}
How large can $\pi_f(G)$ be when $G$ is a planar graph with girth at least
$g$?  
\end{question}

%We do not yet know of any graphs with girth $5$ or $6$ having
%larger values of $\pi_f$ than the Petersen graph or Heawood graph,
%respectively.  When $G$ has girth $4$, we know $2\le \pi_f(G)<3$, and these
%bounds are sharp.  When $G$ has girth $g$, the least possible value of
%$\pi_f(G)$ is $\FR g{g-2}$, which motivates the following question:
%
%\begin{question} \label{question:girth}
%How large can $\pi_f(G)$ be when $G$ has girth $g$?  Does $\pi_f(G) < 2$ always
%hold when $G$ has girth at least 5?  How large can $\pi_f(G)$ be when $G$
%is planar and has girth $g$?  
%\end{question}

In Section~\ref{sec:trees}, we consider graphs without cycles.  We prove that
$\pi_f(G) < \sqrt{2}$ when $G$ is a tree.  The bound improves to
$\pi_f(T)\le\FR43$ for trees obtained from a subdivision of a star  by adding
any number of pendant edges at each leaf.  This is sharp; the tree with $4m+1$
vertices obtained by once subdividing every edge of $K_{1,2m}$ has diameter $4$
and fractional separation dimension $\frac{4m-2}{3m-1}$, which tends to $\FR43$.
We believe that the optimal bound for trees is strictly between $\FR43$ and
$\sqrt2$.

\begin{question} \label{question:trees}
What is the supremum of $\pi_f(G)$ when $G$ is a tree? 
\end{question}

In Section~\ref{multip}, we return to the realm of dense graphs with values of
$\pi_f$ near $3$.  We first compute $\pi_f(K_{m+1,qm})$.  The formula
yields $\pi_f(K_{m,r})< 3(1-\FR1{2m-1})$ for all $r$, so both parts of the
bipartite graph must grow to obtain a sequence of values approaching $3$.
In the special case $q=1$, we obtain $\pi_f(K_{m+1,m})=\FR{3m}{m+1}$.
Thus $\pi_f(\Knnt)=\FR{3m}{m+1}$, where $m=\FL{n/2}$.

\begin{conjecture} \label{conjecture:bipartite}
Among bipartite $n$-vertex graphs, $\pi_f$ is maximized by $\Knnt$.
\end{conjecture}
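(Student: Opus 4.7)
The plan is first to reduce the conjecture to complete bipartite graphs using monotonicity, then to prove a ``balancing'' inequality among them. Suppose $G$ is bipartite on $n$ vertices with bipartition $(A,B)$ of sizes $a \le b$. Then $G \esub K_{a,b}$, so by monotonicity $\pi_f(G) \le \pi_f(K_{a,b})$. Thus the conjecture reduces to showing that $\pi_f(K_{a,b}) \le \pi_f(\Knnt)$ for every $a+b=n$ with $a \le b$. By induction on $b-a$, this further reduces to a single step:
\[
\pi_f(K_{a,b}) \le \pi_f(K_{a+1,b-1}) \qquad \text{whenever } a+1 \le b-1.
\]

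The obstacle is that $K_{a,b}$ is not a subgraph of $K_{a+1,b-1}$, so the inequality cannot come from monotonicity. Instead I would use the matrix-game formulation of Section~\ref{sec:equivalent}, in which $1/\pi_f(K_{a,b})$ is the value of the game where the separator plays a distribution $\mu$ on linear orderings of $V(K_{a,b})$ and the adversary names a nonincident edge pair, with payoff equal to the separation probability. By symmetry under $\mathrm{Sym}(A)\times\mathrm{Sym}(B)$, an optimal $\mu$ may be taken to depend only on the $A/B$ pattern of the ordering (a binary string of length $n$ with $a$ $A$'s), and the separation probability is then the same for every nonincident pair. A short counting argument (analogous to the $24$-orderings computation in Theorem~\ref{bound}) shows that this common separation probability equals $\frac{1}{2}$ times the probability that, for $P$ drawn from $\mu$, the interval in $[n]$ spanned by two uniformly chosen $A$-positions of $P$ meets the interval spanned by two uniformly chosen $B$-positions. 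So $\pi_f(K_{a,b})^{-1}$ equals $\frac{1}{2}$ times the maximum such ``interval-overlap'' probability.

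With this reformulation in hand, my plan is to build, from an optimal pattern distribution $\mu^\star$ for $K_{a+1,b-1}$, a pattern distribution $\mu$ for $K_{a,b}$ of at least the same interval-overlap probability. The natural candidate is a ``flip'' operator: sample $P\sim\mu^\star$, select a uniformly random $A$-position of $P$, and change it to a $B$. One would then couple the random test pairs before and after the flip and try to show that overlap is (weakly) preserved. The hard part will be exactly this monotonicity: flipping an $A$ to a $B$ can convert an overlapping configuration into a non-overlapping $AABB$ or $BBAA$, so the naive flip need not work, and a more delicate operator (for instance, conditioning the choice of flip position on the random test pair, or averaging over a group of flips) may be required. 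A fallback is to extend the explicit computation of $\pi_f(K_{m+1,qm})$ from Section~\ref{multip} to arbitrary $K_{a,b}$ and verify monotonicity of the resulting expression in $b-a$; either route isolates the difficulty in the same place, namely quantifying how the optimal pattern distribution responds to imbalance in the bipartition.
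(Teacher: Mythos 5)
What you have written is a plan, not a proof, and the statement you are addressing is in fact an open conjecture in the paper: the authors prove $\pi_f(K_{m,m})=\pi_f(K_{m+1,m})=\frac{3m}{m+1}$ and compute $\pi_f(K_{m+1,qm})$, and they explicitly remark that what is missing is ``a direct argument showing that moving a vertex from the larger part to the smaller part in $K_{k,n-k}$ increases $\pi_f$'' --- which is exactly your single-step inequality $\pi_f(K_{a,b})\le\pi_f(K_{a+1,b-1})$. Your reduction to complete bipartite graphs via monotonicity, and the further reduction by induction on $b-a$, coincide with what the paper already observes and are fine; likewise the game-theoretic reformulation (symmetrizing over $\mathrm{Sym}(A)\times\mathrm{Sym}(B)$ so that only the $A/B$ pattern matters, with separation probability equal to half the interval-overlap probability) is a correct and reasonable normalization. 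But the entire content of the conjecture is the balancing inequality itself, and at that point you stop: you propose a flip operator, immediately note that flipping an $A$ to a $B$ can destroy overlap (producing an $AABB$ or $BBAA$ pattern), and defer to ``a more delicate operator'' or to an unperformed general computation of $\pi_f(K_{a,b})$. Neither route is carried out, so there is no proof here.

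Two concrete warnings about the gap. First, the paper's own computations show that the analogous balancing move is \emph{false} for complete tripartite graphs ($\pi_f(K_{m+2,m,m})>\pi_f(K_{m+1,m+1,m})$ for $2\le m\le 4$, and $\pi_f(K_{1,(n-1)/2,(n-1)/2})>\pi_f(K_{n/3,n/3,n/3})$), so any flip/coupling argument must exploit bipartiteness in an essential way; a generic ``rebalancing increases the value'' heuristic is known to fail in the nearest neighboring setting. Second, the fallback of extending Theorem~\ref{theorem:complete bipartite} to arbitrary $K_{a,b}$ is itself nontrivial: the paper only determines $\pi_f(K_{m+1,qm})$ (where the optimal orderings have a clean periodic block structure), and for general $a,b$ one would need both a candidate optimal ordering pattern and an exchange argument proving its optimality before any monotonicity in $b-a$ could be checked. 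As it stands, your proposal correctly isolates where the difficulty lies but does not resolve it, so it does not establish the statement.
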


We also prove $\pi_f(K_{m,m,m}) = \FR{6m+2}{2m+1}$.  When $n=6r$, we thus have
$\pi_f(K_{2r,2r,2r})>\pi_f(K_{3r,3r})$.  Surprisingly, the value is larger for
a quite different complete tripartite graph.  Computer search verifies the
extreme among tripartite graphs up to $14$ vertices.  For $n=9$, there is an
anomaly, with $\pi_f(K_{3,3,3})>\pi_f(K_{1,4,4})$.

\begin{conjecture} \label{conjecture:tripartite}
For $n\ge10$, the $n$-vertex graph not containing $K_4$ that maximizes $\pi_f$
is $K_{1,\FL{(n-1)/2},\CL{(n-1)/2}}$.
\end{conjecture}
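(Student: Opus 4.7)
The plan is to use the LP-duality reformulation of $\pi_f$ reviewed in Section~\ref{sec:equivalent}, and to attack the conjecture in two phases: first establishing the value $\pi_f(H_n)$ for $H_n := K_{1,\FL{(n-1)/2},\CL{(n-1)/2}}$, and then showing $\pi_f(G)\le \pi_f(H_n)$ for every $K_4$-free $n$-vertex graph $G$.

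For the first phase, I would classify the pairs of nonincident edges of $H_n$ into types: (i) both edges lie in the bipartite subgraph between the two large parts $A$ and $B$; (ii) exactly one edge is incident to the apex $v$; (iii) both edges are incident to $v$, which is impossible. Then, mirroring the technique used to compute $\pi_f(K_{m,m})$ and $\pi_f(K_{m,m,m})$, I would construct a fractional weighting on pairs together with a dual distribution on orderings achieving a matching ratio. The chain $K_{\CL{n/2},\FL{n/2}}\esub K_{\FL{n/2}+1,\FL{n/2}}\esub K_{1,\FL{(n-1)/2},\CL{(n-1)/2}}$ combined with monotonicity already forces $\pi_f(H_n)\ge\FR{3m}{m+1}$ for $m=\FL{n/2}$; I expect the exact value to be just large enough to exceed the balanced tripartite value $\FR{6r+2}{2r+1}$ with $r=n/3$ whenever $n\ge10$.

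For the second phase, monotonicity lets us restrict to edge-maximal $K_4$-free graphs. The natural sub-plan is to prove first that among such graphs the maximum of $\pi_f$ is attained by a complete tripartite graph, and then to compare complete tripartite graphs among themselves. For the comparison within the tripartite family, I would derive a closed form for $\pi_f(K_{a,b,c})$ extending the formulas established in Section~\ref{multip}, and then optimize the resulting expression over integer points with $a+b+c=n$; the conjecture asserts that this optimum is attained when one part is reduced to size $1$.

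The main obstacle will be the first step of the second phase: edge-maximal $K_4$-free graphs need not be complete multipartite, so the reduction requires either a structural characterization of the extremal graphs for this parameter or a direct LP-dual bound exploiting only the absence of $K_4$. Moreover, the comparison between $H_n$ and the balanced tripartite graph is extremely tight, failing at $n=9$, so any upper bound proved for tripartite graphs must be essentially sharp; rough estimates cannot suffice, and the dual certificates for $H_n$ and for $K_{\CL{n/3},\CL{n/3},\FL{n/3}}$ must be computed and compared with care rather than bounded loosely.
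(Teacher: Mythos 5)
The statement you are addressing is a conjecture, not a theorem: the paper gives no proof of it, only supporting evidence, namely the exact values $\pi_f(K_{m,m,m})=\FR{6m+2}{2m+1}$, $\pi_f(K_{m+1,m,m})=\FR{6m}{2m+1}$, and $\pi_f(K_{1,m,m})=\FR{24m}{8m+5+3/(2\CL{m/2}-1)}$, together with a computer search over tripartite graphs up to $14$ vertices. Your text is likewise not a proof but a plan, and its two decisive steps are exactly the open content of the conjecture. First, the reduction ``among edge-maximal $K_4$-free graphs the maximum is attained by a complete tripartite graph'' is not established by monotonicity or by anything in the paper: maximal $K_4$-free graphs need not be complete multipartite, and Alon's large-girth expander examples show that $K_4$-free graphs with no multipartite structure at all can have $\pi_f$ arbitrarily close to $3$, so this reduction would itself require a new structural or LP-dual argument that neither you nor the paper supplies. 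Second, no closed form for $\pi_f(K_{a,b,c})$ is available; the paper computes only the three special families above, and explicitly records behavior that undercuts naive shifting arguments within the tripartite family (for instance $\pi_f(K_{m+2,m,m})>\pi_f(K_{m+1,m+1,m})$ for $2\le m\le4$, and the anomaly $\pi_f(K_{3,3,3})>\pi_f(K_{1,4,4})$ at $n=9$), so the ``optimize over integer points $a+b+c=n$'' step cannot be carried out from what is known.

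Two further concrete issues: for even $n$ the conjectured extremal graph is $K_{1,\FL{(n-1)/2},\CL{(n-1)/2}}=K_{1,(n-2)/2,n/2}$, which is unbalanced in its two large parts and is not covered by the paper's $K_{1,m,m}$ formula, so even your first phase (the exact value of $\pi_f(H_n)$) is not settled by mirroring the existing computations; and the lower bound you extract from the chain $K_{\CL{n/2},\FL{n/2}}\esub H_n$ via monotonicity, while correct, is far too weak to separate $H_n$ from the balanced tripartite competitor, as you yourself note the comparison is tight and fails at $n=9$. In short, your proposal correctly identifies where the difficulty lies, but it does not close any of the gaps, and the paper itself leaves the statement open.
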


Since $\pi_f(G)$ is always rational, we ask

\begin{question}
Which rational numbers (between $1$ and $3$) occur as the 
fractional separation dimension of some graph?
\end{question}

Finally, in Section~\ref{sec:circular}, we consider the analogues of $\pi$ and
$\pi_f$ defined by using circular orderings of the vertices rather than linear
ones; we use the notation $\pi^\circ$ and $\pi_f^\circ$.  We show first that
$\pi^\circ(G)=1$ if and only if $G$ is outerplanar.  Surprisingly,
$\pi^\circ(K_{m,n})=2$ when $m,n\ge2$ and $mn>4$, but $\pi^\circ$ is 
unbounded, with $\pi^\circ(K_n)>\log_2\log_3(n-1)$.

For the fractional context, we prove $\pi_f^\circ(G)\le\FR32$ for all $G$,
with equality if and only if $K_4\esub G$.  Again no better bound holds
for bipartite graphs; we prove $\pi_f^\circ(K_{m,qm})=\FR{6(qm-1)}{4mq+q-3}$,
which tends to $\FR32$ as $m\to\infty$ when $q=1$.  It tends to $\FR{6m}{4m+1}$
when $q\to\infty$, so again both parts must grow to obtain a sequence on which
$\pi_f^\circ$ tends to $\FR32$.  The proof is different from the linear case.
The questions remaining are analogous to those for $\pi_f$.

\begin{question}
How large can $\pi_f^\circ$ be when $G$ is a planar graph with girth at
least $g$?  Which are the $n$-vertex graphs maximizing $\pi_f^\circ$ among
bipartite graphs and among those not containing $K_4$?  Which rational
numbers between $1$ and $\FR32$ occur?
\end{question}

\section{Fractional Covering and Matrix Games} \label{sec:equivalent}
Given a hypergaph $H$ with vertex set $V(H)$ and edge set $E(H)$, let
$E_v=\{e\in E(H)\st v\in e\}$ for $v\in V(H)$.  The covering number $\tau(H)$
is the solution to the integer linear program ``minimize $\sum_{e\in E(H)} x_e$
such that $x_e\in\{0,1\}$ for $e\in E(H)$ and $\sum_{e\in E_v} x_e\ge 1$ for
$v\in V(H)$.''  The linear programming relaxation replaces the constraint
$x_e\in\{0,1\}$ with $0\le x_e\le 1$.

It is well known (see Theorem 1.2.1 of~\cite{SU}) that the resulting solution
$\tau^*$ equals $\tau_f(H)$.  Multiplying the values in that solution by their
least common multiple $t$ yields a list of edges covering each vertex at least
$t$ times, and hence $\tau_f(H)\le \tau^*t/t$.  Similarly, normalizing an
$(s:t)$-covering yields $\tau^*\le s/t$.  Note that since the solution to a
linear program with integer constraints is always rational, always $\tau_f(H)$
is rational (when $H$ is finite).

A subsequent transformation to a matrix game yields a technique for proving
bounds on $\tau_f(H)$.  The constraint matrix $M$ for the linear program has
rows indexed by $E(H)$ and columns indexed by $V(H)$, with $M_{e,v}=1$ when
$v\in e$ and otherwise $M_{e,v}=0$.  In the resulting matrix game, the edge
player chooses a row $e$ and the vertex player chooses a column $v$, and the
outcome is $M_{e,v}$.  In playing the game repeatedly, each player uses a
strategy that is a probability distribution over the options, and then the
expected outcome is the probability that the chosen vertex is covered by the
chosen edge.  The edge or ``covering'' player wants to maximize this
probability; the vertex player wants to minimize it.

Using the probability distribution $x$ over the rows guarantees outcome at
least the smallest entry in $x^T M$, no matter what the vertex player does.
Hence the edge player seeks a probability distribution $x$ to maximize $t$ such
that $\sum_{e\in E_v}x_e\ge t$ for all $v\in V(H)$.  Dividing by $t$ turns this
into the linear programming formulation for $\tau_f(H)$, with the resulting
optimum being $1/t$.  This yields the following relationship.

\begin{prop}\label{game}
(Theorem~1.4.1 of~\cite{SU})
If $M$ is the covering matrix for a hypergraph $H$, then $\tau_f(H)=1/t$,
where $t$ is the value of the matrix game given by $M$.
\end{prop}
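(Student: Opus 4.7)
The plan is to set up a cost-reciprocal bijection between the positive-cost feasible vectors of the LP that defines $\tau_f(H)$ and the positive-guarantee mixed row strategies in the matrix game, and then to take extrema on both sides. No deep LP duality is required; the equality $\tau_f(H)=1/t$ will follow directly from this normalization, without appealing to the full minimax theorem.

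First I would recall from the paper's earlier discussion that $\tau_f(H)$ equals the optimum $\tau^*$ of the LP ``minimize $\sum_{e\in E(H)} x_e$ subject to $\sum_{e\in E_v} x_e\ge 1$ for every $v\in V(H)$ and $x_e\ge 0$.'' For the game side, a mixed row strategy is a probability vector $p$ on $E(H)$; when the vertex player plays $v$, the probability of covering $v$ is $(p^TM)_v=\sum_{e\in E_v}p_e$. The row player's guarantee under $p$ is $g(p)=\min_v\sum_{e\in E_v}p_e$, and the value of the game is $t=\max_p g(p)$.

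Next I would exhibit the two normalizations. From a feasible $x$ with cost $c=\sum_e x_e$, the distribution $p_e=x_e/c$ satisfies $\sum_{e\in E_v}p_e\ge 1/c$ for every $v$; hence $t\ge g(p)\ge 1/c$, and taking the infimum over feasible $x$ gives $t\ge 1/\tau_f(H)$. Conversely, from a mixed strategy $p$ with guarantee $s>0$, the vector $x_e=p_e/s$ is LP-feasible with cost $1/s$, so $\tau_f(H)\le 1/s$; taking the supremum over $p$ gives $\tau_f(H)\le 1/t$. Combining the two inequalities yields $\tau_f(H)=1/t$.

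The main obstacle---really only a sanity check---is confirming that the divisions by $c$ and $s$ are legitimate, i.e.\ that both optima are positive and attained. This reduces to observing that every vertex of $H$ lies in at least one edge (otherwise the LP is infeasible and the game value is $0$, so the statement is vacuous or ill-posed), which ensures that both the optimal LP cost and the game value are strictly positive and attained by compactness of the simplex. No other idea is needed beyond this dictionary between fractional covers and mixed strategies.
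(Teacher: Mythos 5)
Your proposal is correct and is essentially the paper's own argument (following Theorem~1.4.1 of Scheinerman--Ullman): the same rescaling dictionary between mixed strategies for the edge player with guarantee $s$ and feasible fractional covers of cost $1/s$, applied in both directions. The only cosmetic difference is that the paper's LP relaxation also imposes $x_e\le 1$, which is harmless since any feasible vector can be truncated at $1$ without losing feasibility or increasing cost, so the optimum is unchanged.
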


Just as any strategy $x$ for the edge player establishes 
$\min x^T M$ as a lower bound on the value, so any strategy $y$ for the vertex
player establishes $\max My$ as an upper bound.  The value is established by
providing strategies $x$ and $y$ so that these bounds are equal.
As noted in~\cite{SU}, such strategies always exist.

For fractional separation dimension, we thus obtain the {\it separation game}.
The rows correspond to vertex orderings and the columns to pairs of nonincident
edges.  The players are the {\it ordering player} and the {\it pair player},
respectively.  To prove $\pi_f(G)\le 1/t$, it suffices to find a distribution
for the ordering player such that each nonincident pair is separated with
probability at least $t$.  To prove $\pi_f(G)\ge 1/t$, it suffices to find a
distribution for the pair player such that for each ordering the probability
that the chosen pair is separated is at most $t$.
\looseness -1

The proof of Theorem~\ref{bound} can be phrased in this language.
By making all vertex orderings equally likely, the ordering player achieves
separation probability exactly $\FR13$ for each pair, yielding $\pi_f(G)\le 3$.
By playing the three nonincident pairs in a single copy of $K_4$ equally likely
and ignoring all other pairs, the pair player achieves separation probability
exactly $1/3$ against any ordering, yielding $\pi_f(G)\ge3$.

Another standard result about these games will be useful to us.
Let $\p$ denote the set of pairs of nonincident edges in a graph $G$.
Symmetry in $G$ greatly simplifies the task of finding an optimal strategy
for the pair player.

\begin{prop}\label{symmetry}
(Exercise 1.7.3 of~\cite{SU})
If, for any two pairs of nonincident edges in a graph $G$, some automorphism of
$G$ maps one to the other, then there is an optimal strategy for the pair
player in which all pairs in $\p$ are made equally likely.  In general, there
is an optimal strategy that is constant on orbits of the pairs under the
automorphism group of $G$.
\end{prop}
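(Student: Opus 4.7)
The plan is to symmetrize any optimal pair-player strategy using the action of $\mathrm{Aut}(G)$ on $\p$, a standard averaging argument for matrix games with group symmetry. Any automorphism $\sigma$ of $G$ induces a bijection on $V(G)$, hence on the set of linear orderings of $V(G)$ and simultaneously on $\p$. The crucial observation is that separation is preserved: an ordering $\tau$ separates a pair $\{e,f\}$ if and only if the ordering $\sigma\tau$ separates $\{\sigma(e),\sigma(f)\}$, because applying the same permutation to the vertices of the ordering and to the endpoints of the edges preserves which endpoints precede which. Writing $M$ for the separation matrix (rows indexed by orderings, columns by elements of $\p$), this says $M_{\sigma\tau,\sigma P}=M_{\tau,P}$ for every ordering $\tau$ and every pair $P\in\p$.

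Given any optimal strategy $y$ for the pair player, define $(\sigma\cdot y)_P=y_{\sigma^{-1}P}$ for each $\sigma\in\mathrm{Aut}(G)$. The invariance above shows that the list of entries of $My$ and of $M(\sigma\cdot y)$ agree up to a permutation of the coordinates (the permutation induced by $\sigma$ on orderings), so $\max M(\sigma\cdot y)=\max My$. Hence each $\sigma\cdot y$ is also optimal.

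Now form the average
\[
\bar y \;=\; \frac{1}{|\mathrm{Aut}(G)|}\sum_{\sigma\in\mathrm{Aut}(G)} \sigma\cdot y,
\]
which is again a probability distribution on $\p$. By linearity, $M\bar y$ is the corresponding convex combination of the vectors $M(\sigma\cdot y)$, so $\max M\bar y \le \max_\sigma \max M(\sigma\cdot y)=\max My$. Thus $\bar y$ is also optimal. By construction $\bar y$ is invariant under $\mathrm{Aut}(G)$, so it assigns equal probability to all pairs in the same orbit, proving the second statement. When $\mathrm{Aut}(G)$ is transitive on $\p$ there is a single orbit, so $\bar y$ is the uniform distribution on $\p$, giving the first statement.

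The main delicate point is the invariance $M_{\sigma\tau,\sigma P}=M_{\tau,P}$; once that is in hand, the rest is the familiar group-averaging reduction. Everything else reduces to convexity of $\max$ and linearity of $M$.
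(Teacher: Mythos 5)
Your proof is correct and follows essentially the same route as the paper: both establish that automorphisms permute the coordinates of a pair-player strategy without changing the multiset of entries of $My$, and then average an optimal strategy over $\mathrm{Aut}(G)$ to obtain an orbit-constant (hence, in the transitive case, uniform) optimal strategy. Your write-up merely spells out the invariance $M_{\sigma\tau,\sigma P}=M_{\tau,P}$ and the convexity step that the paper leaves implicit.
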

\begin{proof}
Consider an optimal strategy $y$, yielding $\max My=t$.  Automorphisms of $G$
induce permutations of the coordinates of $y$.  The entries in $My'$ for any
resulting strategy $y'$ are the same as in $My$.  Summing these vectors over
all permutations and dividing by the number of permutations yields a strategy
$y^*$ that is constant over orbits and satisfies $\max My^*\le t$.
\end{proof}

When there is an optimal strategy in which the pair player plays all pairs
in $\p$ equally, the value of the separation game is just the largest fraction
of $\p$ separated by any ordering.  For $\tau_f(H)$ in general,
Proposition 1.3.4\ in~\cite{SU} states this by saying that for a
vertex-transitive hypergraph $H$, always $\tau_f(H)=|V(H)|/r$, where $r$ is the
maximum size of an edge.  For separation dimension, this yields the following:

\begin{cor}\label{trans}
Let $G$ be a graph.  If for any two pairs of nonincident edges in $G$, there is
an automorphism of $G$ mapping one pair of edges to the other, then
$\tau_f(G)=q/r$, where $q$ is the number of nonincident pairs of edges in $G$
and $r$ is the maximum number of pairs separated by any vertex ordering. 
\end{cor}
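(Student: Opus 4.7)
The plan is to derive this directly from the two propositions already established in the section. By Proposition~\ref{symmetry}, under the transitivity hypothesis there is a single orbit of pairs under $\mathrm{Aut}(G)$, so the pair player has an optimal strategy $y$ in which every pair in $\p$ is played with probability $1/q$. Thus finding the game value reduces to analyzing the ordering player's best response to this uniform distribution.

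The key computation is then immediate. For any vertex ordering $\sigma$, the row $(My)_\sigma$ equals $(1/q)$ times the number of pairs in $\p$ separated by $\sigma$. Hence $\max_\sigma (My)_\sigma = r/q$, where $r$ is the maximum, over all orderings, of the number of pairs separated. Because $y$ is an optimal strategy for the pair player, this maximum is in fact the value of the matrix game, so $t = r/q$.

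Finally, I would invoke Proposition~\ref{game}, which identifies $\pi_f(G)$ with $1/t$ for the separation game, yielding $\pi_f(G) = q/r$ (the statement writes $\tau_f(G)$, but this is the instantiation of $\tau_f$ to the separation hypergraph, i.e., $\pi_f$).

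There is no real obstacle here; the work has already been done in Propositions~\ref{game} and~\ref{symmetry}. The only point requiring any care is the logical direction of the symmetry argument: Proposition~\ref{symmetry} guarantees that a uniform strategy exists among the optima, not that every optimum is uniform, so when identifying the game value with $r/q$ one must use that $y$ being optimal makes $\max My$ the value, rather than arguing from an arbitrary optimum. This is automatic once one writes it down.
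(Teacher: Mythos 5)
Your proof is correct and follows essentially the same route as the paper, which derives the corollary from Proposition~\ref{symmetry} together with the game formulation of Proposition~\ref{game} (the paper phrases the final step by citing the general vertex-transitive fact, Proposition~1.3.4 of~\cite{SU}, rather than computing $\max My=r/q$ explicitly). Your remark about using the optimality of the uniform strategy $y$, rather than uniformity of an arbitrary optimum, is exactly the right point of care and matches the paper's reasoning.
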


\section{Characterizing the Extremal Graphs} \label{sec:small_girth}

When $K_4\nosub G$, we can separate $\pi_f(G)$ from $3$ by a function of $n$.

\begin{theorem} \label{theorem:K4free}
If $G$ is an $n$-vertex graph and $K_4 \not\subseteq G$, then
$\pi_f(G) \le 3\left(1 - \frac{12}{n^4}+O(\FR1{n^5})\right)$. 
\end{theorem}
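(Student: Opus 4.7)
The plan is to use the matrix-game formulation from Proposition \ref{game} together with LP duality and a second-moment argument. It suffices to show that for every probability distribution $y$ on the set $\mathcal{P}$ of pairs of nonincident edges in $G$, some linear ordering $\sigma$ of $V(G)$ satisfies $Y(\sigma) := \sum_{p \in \mathcal{P}} y_p \mathbf{1}[\sigma \text{ separates } p] \ge \tfrac{1}{3} + \tfrac{4}{n^4} - O(\tfrac{1}{n^5})$. Under a uniform random $\sigma$, $\mathbb{E}[Y] = \tfrac{1}{3}$, and since $Y \le \max_\sigma Y$ pointwise, $\mathbb{E}[Y^2] \le (\max_\sigma Y) \cdot \mathbb{E}[Y]$, which rearranges to $\max_\sigma Y \ge \tfrac{1}{3} + 3\operatorname{Var}(Y)$. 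So it suffices to establish $\operatorname{Var}(Y) \ge \tfrac{4}{3n^4} - O(\tfrac{1}{n^5})$.

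To analyze the variance, decompose $Y = \sum_T Y_T$ over the 4-subsets $T \subseteq V(G)$, where $Y_T := \sum_{p \in S(T)} y_p \mathbf{1}[\sigma|_T \text{ separates } p]$ and $S(T) \subseteq \mathcal{P}$ consists of those pairs whose vertex set is exactly $T$. Since $G$ contains no $K_4$, every 4-subset has $|S(T)| \le 2$, and for disjoint $T, T'$ the restrictions $\sigma|_T, \sigma|_{T'}$ are independent. Hence $\operatorname{Var}(Y) = \sum_T \operatorname{Var}(Y_T) + \sum_{T \ne T',\, T \cap T' \ne \emptyset} \operatorname{Cov}(Y_T, Y_{T'})$. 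For the diagonal, $Y_T$ takes at most three values, and a direct calculation ($|S(T)|=1$ gives $\operatorname{Var}(Y_T) = \tfrac{2}{9}y_p^2$; $|S(T)|=2$ with $S(T)=\{p_1, p_2\}$ gives $\operatorname{Var}(Y_T) = \tfrac{2}{9}(y_{p_1}^2 - y_{p_1}y_{p_2} + y_{p_2}^2) \ge \tfrac{1}{9}(y_{p_1}^2 + y_{p_2}^2)$) yields $\operatorname{Var}(Y_T) \ge \tfrac{1}{9}\sum_{p \in S(T)} y_p^2$. Summing over $T$ (each pair lies in a unique 4-set) and applying Cauchy--Schwarz ($\sum_p y_p^2 \ge 1/|\mathcal{P}|$) with $|\mathcal{P}| \le 2\binom{n}{4}$ gives $\sum_T \operatorname{Var}(Y_T) \ge \tfrac{1}{18\binom{n}{4}} = \tfrac{4}{3n^4} + O(\tfrac{1}{n^5})$.

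For the cross-terms, the key observation is that for 4-sets $T, T'$ with $|T \cap T'| = 1$, a direct counting argument on uniformly random permutations of the 7 vertices of $T \cup T'$ shows that $\Pr[\sigma|_T \text{ separates } \tau \text{ and } \sigma|_{T'} \text{ separates } \tau'] = \tfrac{1}{9}$ for every choice of matchings $\tau$ on $T$ and $\tau'$ on $T'$, so $\operatorname{Cov}(Y_T, Y_{T'}) = 0$ in this case. For $|T \cap T'| \in \{2, 3\}$, the number of such 4-set pairs is $O(n^6)$ and $O(n^5)$ respectively; a finer analysis of the joint separation probabilities (which depend on the structural configuration of $\tau, \tau'$) combined with $|S(T)|, |S(T')| \le 2$ shows that the total contribution is $O(\tfrac{1}{n^5})$. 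Combining, $\operatorname{Var}(Y) \ge \tfrac{4}{3n^4} - O(\tfrac{1}{n^5})$, so $\max_\sigma Y \ge \tfrac{1}{3} + \tfrac{4}{n^4} - O(\tfrac{1}{n^5})$ and $\pi_f(G) \le 3(1 - \tfrac{12}{n^4} + O(\tfrac{1}{n^5}))$. The main obstacle is the careful control of the cross-terms for $|T \cap T'| \in \{2, 3\}$: individual covariances in these cases can be of order $1$, and a naive bound of the form $|\operatorname{Cov}(Y_T, Y_{T'})| \le C\, w_T w_{T'}$ (with $w_T = \sum_{p \in S(T)} y_p$) aggregated over $(T, T')$ would dominate the $\Theta(1/n^4)$ diagonal. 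Obtaining the sharper $O(1/n^5)$ estimate requires either exploiting additional structure from $K_4$-freeness, or careful aggregation of signed contributions using the identity $\sum_\tau \sum_{\tau'} (\Pr[\text{joint}] - \tfrac{1}{9}) = 0$ (summed over all matching types) to extract cancellation.
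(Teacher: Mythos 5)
Your overall strategy (duality plus a second-moment bound for the uniform ordering distribution) has an unproven core, and you have correctly identified it yourself: the cross-terms with $|T\cap T'|\in\{2,3\}$. The pieces that do work are the reduction $\max_\sigma Y\ge \tfrac13+3\operatorname{Var}(Y)$, the diagonal bound $\sum_T\operatorname{Var}(Y_T)\ge\tfrac19\sum_p y_p^2\ge \tfrac1{18\binom n4}$ (this is where $K_4$-freeness enters via $|\mathcal P|\le 2\binom n4$), and, pleasantly, the claim that the covariance vanishes when $|T\cap T'|=1$ (conditioning on how many vertices of each 4-set precede the shared vertex, each separation event has conditional probability exactly $\tfrac13$). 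But the theorem does not follow without the missing estimate, and the estimate as stated is doubtful: even for uniform $y$ (each $y_p\approx n^{-4}$), the $\Theta(n^6)$ pairs with $|T\cap T'|=2$ contribute terms of size $\Theta(n^{-8})$ each, i.e.\ a potential total of order $n^{-2}$, which is $n^2$ times the diagonal; and for adversarial $y$ concentrated on a bounded set of vertices the individual covariances are of order $1$ relative to the diagonal. The cancellation identity you propose (summing $\Pr[\text{joint}]-\tfrac19$ over matching types) does not obviously help, because the covariances enter weighted by the adversarial products $y_py_{p'}$, which need not be constant across types. Note also that what your route requires --- $\operatorname{Var}(Y)\ge \tfrac43 n^{-4}-O(n^{-5})$ uniformly over all distributions $y$ on $K_4$-free pairs --- is at least as strong as the theorem itself (it implies it via your first inequality), so the ``finer analysis'' you defer is really the whole problem, and it is not clear it is true: a $y$ chosen near a signed combination on which the separation count is constant could in principle have variance far below $n^{-4}$ even though $\max_\sigma Y$ stays large, since large maximum does not force large variance.

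For comparison, the paper avoids duality and variance altogether and plays the ordering player directly with an explicit non-uniform distribution: start from all $n!$ orderings; for each $4$-set $S=\{a,b,c,d\}$ (which, by $K_4$-freeness, misses some edge, say $ac$) and each fixed ordering $\rho$ of the other $n-4$ vertices, replace the $24$ orderings beginning with $S$ and ending with $\rho$ by a tailored list of $24$ that separates each of the two induced nonincident pairs $\{ab,cd\}$ and $\{ad,bc\}$ at least $12$ times (up from $8$) while separating every other relevant pair at least $8$ times. Since every pair of nonincident edges is the induced pair of its own $4$-set, every pair gains, giving separation probability at least $\tfrac13+\tfrac{4(n-4)!}{n!}$ for every pair simultaneously, hence the bound. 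If you want to salvage your approach, you would need to replace the uniform $\sigma$ in the second-moment argument by something like this locally modified distribution, or find a genuinely new argument controlling the negative covariances; as written, the proof is incomplete at its decisive step.
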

\begin{proof}
Let $p=\FR13+\FR{4(n-4)!}{n!}$; note that $1/p$ has the form
$3\left(1-\frac{12}{n^4}+O(\FR1{n^5})\right)$.  It suffices to give a
probability distribution on the orderings of $V(G)$ such that each nonincident
pair of edges is separated with probability at least $p$.  We do this by
modifying the list of all orderings.

Choose any four vertices $a,b,c,d\in V(G)$.  For each ordering $\rho$
of the remaining $n-4$ vertices, $24$ orderings begin with $\{a,b,c,d\}$ and
end with $\rho$.  By symmetry, we may assume $ac\notin E(G)$.  Thus the
possible pairs of nonincident edges induced by $\{a,b,c,d\}$ are $\{ab,cd\}$
and $\{ad,bc\}$.  We increase the separation probability for these pairs, even
though these four edges need not all exist.

The pairs $\{ab,cd\}$ and $\{ad,bc\}$ are each separated eight times in the list
of $24$ orderings.  We replace these $24$ with another list of $24$ (that is,
the same total weight) that separate $\{ab,cd\}$ and $\{ad,bc\}$ each at least
twelve times, while any other pair of disjoint vertex pairs not involving
$\{a,c\}$ is separated at least eight times.  Since $\{a,b,c,d\}$ is arbitrary
and we do this for each $4$-set, the pairs $\{ab,cd\}$ and $\{ad,bc\}$ remain
separated at least eight times in all other groups of $24$ orderings.  Thus the
separation probability increases from $\FR13$ to at least $p$ for all pairs of
nonincident edges.

Use four orderings each that start with $abcd$ or $bcad$ and eight each that
start with $cdba$ or $adbc$, always followed by $\rho$.  By inspection, each of
$\{ab,cd\}$ and $\{ad,bc\}$ is separated twelve times in the list.  The number
of orderings that separate any pair of nonincident edges having at most two
vertices in $\{a,b,c,d\}$ does not change.

It remains only to check pairs with three vertices in this set, consisting of
one edge induced by this set and another edge with one endpoint in the set.
The induced edge is one of $\{ab,cd, bc, ad, bd\}$ (never $ac$), and the other
edge uses one of the remaining two vertices in $\{a,b,c,d\}$.  In each case,
the endpoints of the induced edge appear before the third vertex in at least
eight of the orderings in the new list of $24$; this completes the proof.
\end{proof}

For $n$-vertex graphs not containing $K_4$, Theorem~\ref{theorem:K4free}
separates $\pi_f(G)$ from $3$ by a small amount.  We believe that a much larger
separation also holds (Conjecture~\ref{conjecture:tripartite}).  Nevertheless,
we show next that even when $G$ is bipartite there is no upper bound less than
$3$.

\begin{theorem}\label{theorem:balanced bipartite}\label{kmm}
$\pi_f(K_{m,m}) = \FR{3m}{m+1}$ for $m\ge2$.
\end{theorem}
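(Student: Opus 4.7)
The plan is to apply Corollary~\ref{trans}. The automorphism group of $K_{m,m}$ contains $(S_m\times S_m)\rtimes S_2$, which acts transitively on nonincident edge pairs (each such pair consists of two $A$-vertices and two $B$-vertices, and all such configurations are group-equivalent). Hence $\pi_f(K_{m,m})=q/r$, where $q=m^2(m-1)^2/2$ is the number of nonincident edge pairs and $r$ is the maximum number of them separated by any single linear ordering; the problem reduces to computing $r$. Encode a linear ordering of $V(K_{m,m})$ as a binary string $s\in\{0,1\}^{2m}$ with $0$ for $A$-vertices and $1$ for $B$-vertices. A nonincident pair $\{ab,cd\}$ with $a,c\in A$ and $b,d\in B$ is separated iff the four positions of $a,b,c,d$, sorted as $i_1<i_2<i_3<i_4$, satisfy $s_{i_1}\ne s_{i_2}$ and $s_{i_3}\ne s_{i_4}$. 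Let $f(s)$ denote this count, so $r=\max_s f(s)$.

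Any such 4-subset has exactly two $0$s and two $1$s, so inclusion-exclusion gives $f(s)=\binom{m}{2}^2-g_0(s)-g_1(s)$, where $g_0$ and $g_1$ count 4-subsets with sorted bit-patterns $0011$ and $1100$. Writing $z_j$ for the position of the $j$-th $0$ and $o_k$ for the position of the $k$-th $1$, I would derive $g_0=\sum_{j,k:\,z_j<o_k}(j-1)(m-k)$ and $g_1=\sum_{j,k:\,o_k<z_j}(k-1)(m-j)$. Combining these through the indicator $\chi(j,k)=[z_j<o_k]$ and invoking the algebraic identity $(j-1)(m-k)-(k-1)(m-j)=(m-1)(j-k)$ collapses $g_0+g_1$ to $\binom{m}{2}^2-(m-1)h(s)$, where $h(s)=\sum_{j,k:\,z_j<o_k}(k-j)$. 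Letting $A_j$ denote the number of $1$s preceding $z_j$ and evaluating the inner sum over $k$ yields $h(s)=\sum_{j=1}^{m}A_j(2j-1-A_j)/2$, and hence the striking identity $f(s)=(m-1)h(s)$.

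To finish, note that for each $j$ the quadratic $A(2j-1-A)/2$ attains its integer maximum $\binom{j}{2}$ at $A\in\{j-1,j\}$. The choice $A_j=j-1$ for all $j$, realized by the alternating string $0101\cdots01$, satisfies the required monotonicity $0\le A_1\le\cdots\le A_m\le m$, so $h(s)\le\sum_{j=1}^{m}\binom{j}{2}=\binom{m+1}{3}$, with equality. Therefore $r=(m-1)\binom{m+1}{3}=m(m-1)^2(m+1)/6$, giving $\pi_f(K_{m,m})=q/r=3m/(m+1)$. The main obstacle is recognizing and verifying the identity $f(s)=(m-1)h(s)$: the cancellation in $(j-1)(m-k)-(k-1)(m-j)=(m-1)(j-k)$ is what decouples the global extremal problem into a termwise quadratic optimization, where the compatibility of the pointwise maximizer $A_j=j-1$ with monotonicity is automatic.
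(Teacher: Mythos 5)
Your proof is correct, and while it opens the same way as the paper's -- invoking Corollary~\ref{trans} to reduce the problem to computing the maximum number $r$ of pairs separated by a single ordering, with $q=m^2(m-1)^2/2$ -- the way you determine $r$ is genuinely different. The paper exhibits the candidate optimal orderings (blocks $\{x_i,y_i\}$ of one vertex from each part), counts their separated pairs by cases according to how many block indices are hit, getting $4\binom m4+5\binom m3+\binom m2$, and then proves optimality by a local exchange: if some $y_j$ immediately precedes $x_i$ with $j>i$, swapping them changes the count by $(j-1)(m-i)-(i-1)(m-j)>0$. You instead derive an exact closed formula for the objective on \emph{every} ordering: encoding the ordering by its $X/Y$ pattern, the number of separated pairs is $(m-1)\sum_{j}A_j(2j-1-A_j)/2$, which you then maximize termwise, the alternating pattern achieving each termwise bound $\binom j2$ simultaneously. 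Notably, the cancellation $(j-1)(m-k)-(k-1)(m-j)=(m-1)(j-k)$ driving your identity is exactly the quantity the paper compares in its swap step, so your argument is a ``globalized'' version of the paper's exchange; it buys an exact formula (hence, in principle, a description of all maximizers) at the cost of heavier algebra, whereas the paper's lighter exchange template is what gets reused for $K_{m+1,qm}$ later. Two small points to tighten, both routine: the statement ``$\{ab,cd\}$ is separated iff $s_{i_1}\ne s_{i_2}$ and $s_{i_3}\ne s_{i_4}$'' should be phrased per $4$-set (a good $4$-set contributes exactly one separated pair of the two pairs it induces, a bad one contributes none), which is what your count $f(s)$ actually uses; and $h(s)=\sum_j A_j(2j-1-A_j)/2$ is not obtained by evaluating the inner sum over $k$ in $\sum_{z_j<o_k}(k-j)$ for fixed $j$ -- you first need $\sum_{j,k}(k-j)=0$ to rewrite $h(s)=\sum_{o_k<z_j}(j-k)$ (equivalently, use the $0/1$ symmetry), after which the fixed-$j$ evaluation gives your formula.
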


\begin{proof}
The pairs in $\p$ all lie in the same orbit under automorphisms of $K_{m,m}$,
so Corollary~\ref{trans} applies.  There are $2\CH m2^2$ pairs in $\p$ (played
equally by the pair player).  It suffices to show that the maximum number of
pairs separated by any ordering is $\FR{m+1}{3m}2\CH m2^2$.

Let the parts of $K_{m,m}$ be $X$ and $Y$. Let $\sigma$ be an ordering
$\VEC v1{2m}$ such that each pair $\{v_{2i-1},v_{2i}\}$ consists of one
vertex of $X$ and one vertex of $Y$.  The ordering player will in fact make
all such orderings equally likely.  It suffices to show that $\sigma$ separates
$\FR{m+1}{3m}2\CH m2^2$ pairs and that no ordering separates more.

By symmetry, we may index $X$ as $\VEC x1m$ and $Y$ as $\VEC y1m$ in order in
$\sigma$, so that $\{v_{2i-1},v_{2i}\}=\{x_i,y_i\}$ for $1\le i\le m$, though
$x_i$ and $y_i$ may appear in either order.  Consider an element of $\p$
separated by $\sigma$.  The vertices involved in the separation may use
two, three, or four indices among $1$ through $m$.

Pairs hitting $i,j,k,l$ with $i<j<k<l$ must be separating $x_iy_j$ or
$y_ix_j$ from $x_ky_l$ or $y_kx_l$.  Hence there are $4\CH m4$ such pairs.

Pairs hitting only $i,j,k$ with $i<j<k$ involve two vertices with the same
index.  If that index is $i$ or $k$, then there are two ways to complete the
edge pair.  However, if $x_j$ and $y_j$ are both used, then there is only one
way to choose from $\{x_i,y_i\}$ and from $\{x_k,y_k\}$ to complete a separated
pair, determined by the order of $x_j$ and $y_j$.  Hence there are $5\CH m3$
such pairs.

A separated pair hitting only $i$ and $j$ must be $\{x_iy_i,x_jy_j\}$.
Hence in total $\sigma$ separates $4\CH m4+5\CH m3+\CH m2$ pairs in $\p$.
In fact, this sum equals $\FR{m+1}{3m}2\CH m2^2$.

Now let $\sigma$ be an ordering not of the specified form.  By symmetry we
may again index $X$ as $\VEC x1m$ and $Y$ as $\VEC y1m$ in order in $\sigma$.
However, now some vertex precedes another vertex with a lesser index.  That is,
by symmetry we may assume that $y_j$ appears immediately before $x_i$ for some
$i$ and $j$ with $j>i$.

Form $\sigma'$ from $\sigma$ by interchanging the positions of $y_j$ and $x_i$.
Any pair separated by exactly one of $\sigma$ and $\sigma'$ has $x_i$ and $y_j$
as endpoints of the two distinct edges.  There are $(i-1)(m-j)$ such pairs in
$\sigma$ and $(j-1)(m-i)$ such pairs in $\sigma'$.  Since $m\ge2$ and $j>i$,
comparing these quantities shows that $\sigma'$ separates strictly more pairs
than $\sigma$.
\end{proof}

To prove that always $\pi_f(\Knnt)=\FR{3m}{m+1}$, where $m=\FL{n/2}$,
we need also to compute $\pi_f(K_{m+1,m})$.  We postpone this to
Section~\ref{multip}.  Note that the simple final expression arises
when we cancellation common factors in the numerator and denominator.
We would hope that such a simple formula has a simple direct
proof, but we have not found one.

\section{Graphs with Larger Girth} \label{sec:larger_girth}

Among sparser graphs, it is natural to think first about cycles.

\begin{prop} \label{prop:cycles}
$\pi_f(C_n) = \FR n{n-2}$, for $n \ge 4$.
\end{prop}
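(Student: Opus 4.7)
The plan is to establish matching game bounds in the formulation of Section~\ref{sec:equivalent}. Label $V(C_n)=\{v_0,\ldots,v_{n-1}\}$ in cyclic order so that the edges are $e_i=v_iv_{i+1}$ (indices mod $n$).

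For the upper bound $\pi_f(C_n)\le\FR{n}{n-2}$, I would have the ordering player play the $n$ cyclic rotations $\sigma_k = v_k v_{k+1}\cdots v_{k+n-1}$ uniformly at random. In $\sigma_k$, every edge except the ``wraparound'' $e_{k-1}$ (which occupies positions $n$ and $1$) spans two consecutive positions; hence any nonincident pair $\{e_i,e_j\}$ in which neither edge is the wraparound has disjoint length-$1$ position-intervals and is therefore separated. Each pair fails to be separated only when $k\equiv i+1$ or $k\equiv j+1$ (mod $n$), so the separation probability is exactly $\FR{n-2}{n}$.

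For the lower bound, I would have the pair player use the uniform distribution on the $\FR{n(n-3)}{2}$ nonincident pairs. By Proposition~\ref{game}, it then suffices to show that every linear ordering $\sigma$ separates at most $\CH{n-2}{2}$ nonincident pairs, giving separation probability at most $\FR{n-2}{n}$. I would prove this counting bound by induction on $n$, with the case $n=3$ vacuous. For the inductive step ($n\ge 4$), fix $\sigma$, let $w$ be the vertex in the first position, and let $a,b$ be its $C_n$-neighbors at positions $p_a<p_b$. Form $C_{n-1}$ on $V(C_n)\setminus\{w\}$ by replacing the path $awb$ with the edge $ab$, and let $\sigma'$ be the induced ordering. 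For edges in the common set $E_0:=E(C_n)\setminus\{aw,bw\}=E(C_{n-1})\setminus\{ab\}$, separation status under $\sigma$ and $\sigma'$ agrees. Writing $c^0_k$ and $c^+_k$ for the number of $C_n$-edges with both endpoints in positions $\{1,\ldots,k\}$ and $\{k+1,\ldots,n\}$ of $\sigma$ respectively, a direct case analysis shows that the separated nonincident pairs in $C_n$ involving $aw$ or $bw$ number $c^+_{p_a}+c^+_{p_b}$, while those in $C_{n-1}$ involving $ab$ number $c^0_{p_a-1}+c^+_{p_b}$. Hence $S(\sigma)-S(\sigma')=c^+_{p_a}-c^0_{p_a-1}$. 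Since $C_n$ restricted to the $n-p_a$ vertices past position $p_a$ is a proper subgraph of a cycle and therefore a forest, $c^+_{p_a}\le n-p_a-1\le n-3$ (using $p_a\ge 2$); combined with $c^0_{p_a-1}\ge 0$ and the inductive hypothesis $S(\sigma')\le\CH{n-3}{2}$, this yields $S(\sigma)\le\CH{n-3}{2}+(n-3)=\CH{n-2}{2}$.

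The main obstacle is the careful accounting in the inductive step: one must verify that the $c^+_{p_b}$ terms cancel between the $\{aw,bw\}$-pairs in $C_n$ and the $\{ab\}$-pairs in $C_{n-1}$, leaving the clean difference $c^+_{p_a}-c^0_{p_a-1}$. The forest bound on $c^+_{p_a}$ is then exactly what matches the target increment $n-3=\CH{n-2}{2}-\CH{n-3}{2}$ needed to close the induction.
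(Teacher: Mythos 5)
Your proof is correct, and the upper bound (uniform play over the $n$ rotations, each pair failing only in the two rotations whose wraparound edge is one of its members) is exactly the paper's. The lower bound, however, takes a genuinely different route. The paper has the pair player play only the $n$ ``consecutive'' pairs $\{v_{i-1}v_i,\,v_{i+1}v_{i+2}\}$ uniformly and shows no ordering separates more than $n-2$ of these $n$ pairs, via a short iterated-implication argument: if $n-1$ of them were separated, then $v_i$ before $v_{i+2}$ forces $v_{i+1}$ before $v_{i+3}$, and chasing this around the cycle (with a parity choice at the end) makes $v_1$ precede itself. You instead play the uniform distribution on all $\frac{n(n-3)}{2}$ nonincident pairs and prove the stronger extremal fact that every ordering separates at most $\binom{n-2}{2}$ pairs in total, by deleting the first-placed vertex $w$, suppressing it to get $C_{n-1}$, and tracking the change $S(\sigma)-S(\sigma')=c^+_{p_a}-c^0_{p_a-1}\le n-3$; your accounting of the pairs through $aw$, $bw$, and $ab$ is accurate, and the identity $\binom{n-3}{2}+(n-3)=\binom{n-2}{2}$ closes the induction. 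Note that since the pairs of $C_n$ fall into several orbits (by distance along the cycle), Corollary~\ref{trans} does not force the uniform strategy to be optimal, but any strategy yields a valid lower bound, and here the ratio $\binom{n-2}{2}\big/\frac{n(n-3)}{2}=\frac{n-2}{n}$ matches the upper bound exactly. What each approach buys: the paper's small-support strategy avoids any global extremal count and is shorter; yours costs a more careful induction but yields extra information, namely that the rotation orderings are exactly optimal for the total number of separated pairs, in the spirit of the counting arguments the paper uses later for complete multipartite graphs.
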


\begin{proof}
The ordering player uses the $n$ rotations of an $n$-vertex path along the 
cycle, equally likely.  Nonincident edges $e$ and $e'$ are separated unless
$e$ or $e'$ consists of the first and last vertex.  Hence any pair in $\p$
is separated with probability $\FR{n-2}n$.

Letting the vertices be $\VEC v1n$ in order along the cycle, the pair player
makes the pairs $\{v_{i-1}v_i,v_{i+1}v_{i+2}\}$ (modulo $n$) equally likely.
It suffices to show that any ordering separates at most $n-2$ of these pairs.
Otherwise, by symmetry some ordering $\sigma$ separates the $n-1$ of them
satisfying $2\le i\le n$.  By symmetry $\{v_1,v_2\}$ precedes $\{v_3,v_4\}$
in $\sigma$.  If $v_i$ precedes $v_{i+2}$, then separating $v_iv_{i+1}$ from
$v_{i+2}v_{i+3}$ requires $v_{i+1}$ to precede $v_{i+3}$.  Iterating this
argument yields $v_{n-2}$ before $v_n$ and $v_{n-1}$ before $v_1$ in $\sigma$.
Since $v_1$ precedes both $v_3$ and $v_4$, choosing the right one by parity
leads to $v_1$ preceding $v_1$, a contradiction.
\end{proof} 

Proposition~\ref{prop:cycles} suggests that $\pi_f$ decreases as girth
increases.  We compute the values for the smallest $3$-regular graphs of
girth $5$ and girth $6$.

\begin{prop}
The fractional separation dimension of the Petersen graph is $\FR{30}{17}$. 
\end{prop}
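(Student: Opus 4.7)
The plan is to apply the matrix game framework of Proposition~\ref{game} together with the orbit-reduction of Proposition~\ref{symmetry}, exploiting the rich symmetry of the Petersen graph $P$. Since $P$ is $3$-regular with $15$ edges and each edge is incident to $4$ others, there are $\binom{15}{2}-30 = 75$ nonincident pairs. Using the Kneser representation $P=K(5,2)$, I would verify that these $75$ pairs fall into exactly two orbits under $\mathrm{Aut}(P)\cong S_5$: the $60$ \emph{close} pairs, in which some endpoint of one edge is adjacent in $P$ to some endpoint of the other, and the $15$ \emph{far} pairs, which for some $i\in\{1,\dots,5\}$ are two edges of the $3$-edge matching supported on the six $2$-subsets of $\{1,\dots,5\}\setminus\{i\}$. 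Since $75\cdot(17/30)=42.5\notin\mathbb{Z}$, Corollary~\ref{trans} cannot apply, and in particular neither uniform pure strategy is optimal.

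For the lower bound $\pi_f(P)\ge 30/17$, Proposition~\ref{symmetry} reduces the pair player to a one-parameter family: place weight $\alpha/60$ on each close pair and $(1-\alpha)/15$ on each far pair. Against an ordering $\sigma$, the expected score is $\alpha\, s_1(\sigma)/60+(1-\alpha)\,s_2(\sigma)/15$, where $s_i(\sigma)$ counts the orbit-$i$ pairs separated by $\sigma$. I would choose the unique $\alpha\in[0,1]$ for which the maximum of this expression over orderings is $17/30$, attained by (at least) two distinct extremal ordering types. Because the pair $(s_1(\sigma),s_2(\sigma))$ is an invariant of the $\mathrm{Aut}(P)$-orbit of $\sigma$, verification reduces to a finite case analysis over orbit representatives.

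For the upper bound $\pi_f(P)\le 30/17$, I would construct an $\mathrm{Aut}(P)$-invariant probability distribution on orderings of $V(P)$ that separates each nonincident pair with probability at least $17/30$. Invariance makes the separation probability constant on each pair-orbit, so it suffices to check one representative close pair and one representative far pair. A natural candidate support is an $\mathrm{Aut}(P)$-orbit of orderings arising from a structural feature of $P$ (for instance orderings interleaving the outer and inner $5$-cycles of the standard drawing, or orderings respecting a Hamilton cycle), with weights tuned so as to equalize separation probabilities across the two orbits.

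The main obstacle is locating these two explicit extremal strategies. The denominator $17$ is coprime to $60$, $15$, and $75$, so no obvious symmetric family produces the required probabilities by a clean averaging argument; the strategies must be precisely tuned, which is most practically done by solving the associated linear program with computer assistance. The write-up then consists in presenting the two distributions explicitly and verifying, using the orbit data above, that the induced separation probabilities and the weighted ordering scores meet at the common value $17/30$, certifying equality in the minimax identity of Proposition~\ref{game}.
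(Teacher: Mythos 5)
Your framework is the same as the paper's (the matrix game plus orbit reduction, with the $75$ pairs split into the $60$-pair and $15$-pair orbits under $S_5$ --- your ``far'' pairs are exactly the paper's pairs of opposite edges on a $6$-cycle), but as written the proposal has a genuine gap: it never produces the two strategies, and the decisive facts are exactly the ones you defer. The paper's proof consists of (i) an explicit ordering $\sigma$ of the $2$-subsets (given concretely) that separates $34$ of the $60$ close pairs and $9$ of the $15$ far pairs, so that the uniform distribution on its $120$ images under $S_5$ separates every pair with probability at least $\min\{\tfrac{34}{60},\tfrac{9}{15}\}=\tfrac{17}{30}$, and (ii) the computer-verified statement that \emph{no} ordering separates more than $34$ close pairs, so the pair player playing the close orbit uniformly (your $\alpha=1$) caps the value at $\tfrac{34}{60}$. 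Without an explicit ordering achieving $(34,9)$ and without the verified maximum $34$, nothing in your write-up certifies the number $\tfrac{30}{17}$; the plan is contingent on an LP computation you have not carried out.

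Two of your guiding specifics would also mislead the search. First, the Petersen graph is famously non-Hamiltonian, so ``orderings respecting a Hamilton cycle'' do not exist; the good ordering is not of either form you suggest. Second, the optimum does not ``equalize separation probabilities across the two orbits'': in the optimal ordering distribution the far pairs are separated with probability $\tfrac{9}{15}=\tfrac{36}{60}$, strictly more than $\tfrac{17}{30}$, and correspondingly the optimal pair-player strategy is pure on the close orbit rather than an interior mixture with a uniquely tuned $\alpha$; the inequality $\tfrac{9}{15}>\tfrac{34}{60}$ is precisely what lets the pair player ignore the far pairs. (Your remark that Corollary~\ref{trans} fails because $75\cdot\tfrac{17}{30}\notin\mathbb{Z}$ is beside the point; it fails simply because the pairs do not form a single orbit.) So the skeleton matches the paper, but the content that makes it a proof --- the explicit $(34,9)$ ordering and the exhaustive bound $34$ on close pairs --- is missing, and the structural guesses for finding it need correction.
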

\begin{proof}
The $75$ pairs of nonincident edges in the Petersen graph fall into two orbits:
the $15$ pairs that occur as opposite edges on a 6-cycle (Type 1) and the
$60$ pairs that do not (Type 2).  Thus some optimal strategy for the pair
player will make Type 1 pairs equally likely and make Type 2 pairs equally
likely.  There exist orderings that separate nine Type 1 pairs and $34$ Type 2
pairs.  With the graph expressed as the disjointness graph of the $2$-element
subsets of $\{1,2,3,4,5\}$, such an ordering $\sigma$ is
$$
12,34,51,23,45,13,42,35,41,25
$$
Since $\FR{9}{15}>\FR{34}{60}$, making the $120$ orderings 
generated from $\sigma$ by permuting $\{1,2,3,4,5\}$ equally likely
establishes $\FR{30}{17}$ as an upper bound on the fractional separation
dimension.

Since $\FR{9}{15}>\FR{34}{60}$, the pair player establishes a matching lower
bound by playing only Type 2 pairs, equally likely, if no ordering separates
more than $34$ Type 2 pairs.  Computer search shows that this is true.
\end{proof}

\begin{prop}
The fractional separation dimension of the Heawood graph is $\FR{28}{17}$. 
\end{prop}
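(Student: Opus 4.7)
The Heawood graph $H$ has $14$ vertices and $21$ edges, is bipartite and $3$-regular with girth $6$, and $|\mathrm{Aut}(H)|=336$. Its number of nonincident edge pairs is $\binom{21}{2}-14\binom{3}{2}=168$. My plan mirrors the Petersen graph argument: classify the nonincident edge pairs into $\mathrm{Aut}(H)$-orbits, apply Proposition~\ref{symmetry} to restrict the pair player's optimal strategies, exhibit an ordering $\sigma$ realizing the claimed value, and verify the matching lower bound by computer search.

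Because $H$ has girth $6$, any two nonincident edges either are the end-edges of a path of length $3$ (Type~1) or are not (Type~2); a direct count gives $21\cdot 4=84$ Type~1 pairs and hence $84$ Type~2 pairs. Since $H$ is $4$-arc-transitive, $\mathrm{Aut}(H)$ is transitive on paths of length $3$, and thus on Type~1 pairs; I would analyze Type~2 separately (using the Fano-plane description of $H$) to check whether it forms a single orbit or splits further. By Proposition~\ref{symmetry}, some optimal pair strategy is constant on each resulting orbit. I would then look for a linear ordering $\sigma$ of $V(H)$ whose $\mathrm{Aut}(H)$-symmetrization yields separation probability at least $17/28$ on every orbit, so that the Type~$i$ separation count under $\sigma$ is at least $\tfrac{17}{28}\cdot 84 = 51$ for each $i$. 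Averaging over the $336$ orderings in the $\mathrm{Aut}(H)$-orbit of $\sigma$ then gives $\pi_f(H) \le 28/17$. A natural candidate for $\sigma$ is one built from an incidence ordering of points and lines in the Fano plane.

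For the matching lower bound, as in the Petersen proof, the pair player places all mass on whichever orbit gives the smallest coverage ratio $s_i/84$ (or plays each orbit with equal total weight if both ratios already equal $17/28$). This reduces the problem to showing that no single vertex ordering separates more than $51$ pairs of the critical orbit. I would verify this by a computer search over orderings, using the action of $\mathrm{Aut}(H)$ to fix the first few vertices and updating the separation count incrementally as vertices are appended. The main obstacle is precisely this verification: there does not appear to be a short combinatorial reason for the $51$-pair ceiling, and the Petersen argument likewise ended with a computer check, so I do not expect a purely analytic shortcut here.
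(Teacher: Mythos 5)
Your plan is essentially the paper's proof: the same split of the $168$ pairs into the $84$ pairs sharing an incident edge and the $84$ remaining pairs (your Type labels are swapped relative to the paper's), the same use of the symmetry reduction, an ordering built from the Fano-plane incidence structure symmetrized over the $336$ automorphisms to get separation counts $51$ and $54$ (hence the $28/17$ upper bound), and a computer search confirming that no ordering separates more than $51$ pairs of the critical orbit for the matching lower bound. The only difference is that you have not yet executed the orbit verification, the explicit ordering, or the search, but these are exactly the computational steps the paper itself relies on.
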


\begin{proof}
The $168$ pairs of nonincident edges in the Heawood graph fall into two orbits:
84 pairs that are opposite on a a 6-cycle (Type 1) and 84 pairs that have a
common incident edge (Type 2).  Some optimal strategy for the pair player will
make Type 1 pairs equally likely and make Type 2 pairs equally likely.
There exist orderings that separate 51 Type 2 pairs and 54 Type 1 pairs.
With the graph expressed as the incidence graph of the Fano plane, such an
ordering is
$$
1, 124, 4, 457, 5, 561, 6, 346, 3, 235, 2, 672, 7, 713
$$
Making the 336 orderings generated by automorphisms equally likely establishes
$\FR{28}{17}$ as an upper bound on the fractional separation dimension.

The pair player establishes a matching lower bound by playing only Type 2
pairs, equally likely, if no ordering separates more than $51$ Type 2 pairs.
Computer search (reduced by symmetries) shows that this is true.
\end{proof}

%Note that
%$\pi_f(C_6)=\FR64<\FR{28}{17}<\FR53=\pi_f(C_5)<\FR{30}{17}<2=\pi_f(C_4)$.
%We conjecture that $\pi_f(G)<2$ when $G$ has girth at least $5$.  We do not yet
%know of any graphs with girth $5$ or $6$ having larger values of $\pi_f$ than
%the Petersen graph or Heawood graph, respectively.  In general, we ask for the
%maximum value of $\pi_f$ among graphs with girth $g$.  By
%Proposition~\ref{prop:cycles}, the least value is $\FR g{g-2}$.

These small graphs suggested that perhaps $\pi_f(G)<2$ when $G$ has girth at
least $5$.  However, Alon~\cite{Alon} observed using the Expander Mixing Lemma
that expander graphs with large girth (such as Ramanujan graphs) still have
$\pi_f$ arbitrarily close to $3$.

Lubotzky, Phillips, and Sarnak~\cite{LPS} introduced {\it Ramanujan graphs}
as $d$-regular graphs in which every eigenvalue with magnitude less than $d$
has magnitude at most $2\sqrt{d-1}$.  For $d-1$ being prime, they further
introduced an infinite family of such graphs whose girth is at least
$\FR23\log_{d-1} n$ when $n$ is the number of vertices.

Let $G$ be a $d$-regular $n$-vertex graph whose eigenvalues other than $d$ have
magnitude at most $\lambda$.  The Expander Mixing Lemma of Alon and
Chung~\cite{AC} states that whenever $A$ and $B$ are two vertex sets in $G$,
the number of edges of $G$ joining $A$ and $B$ differs from $|A|\,|B|(d/n)$ by
at most $\lambda\sqrt{|A|\,|B|}$ (edges with both endpoints in $A\cap B$ are
counted twice).

Alon applied this lemma to an arbitrary vertex ordering $\sigma$ of $G$,
breaking $\sigma$ into $k$ blocks of consecutive vertices, each with length
at most $\CL{n/k}$.  Intuitively, by the Expander Mixing Lemma the vast
majority of the edges can be viewed as forming a blowup of a complete graph
with $k$ vertices.  With $k$ chosen to be about $d^{1/3}$, Alon shows that
asymptotically only $\FR{d^2n^2}{24}$ pairs of nonincident edges can be
separated by $\sigma$ .  However, there are asymptotically $\FR{d^2n^2}8$ pairs
of nonincident edges.  Thus every ordering can separate only about a third of
the pairs.  As noted, this graph $G$ can be chosen to have arbitrarily large
girth.

Alon extended the question in our Conjecture~\ref{conjecture:tripartite} by
asking how small $\epsilon$ can be made so that there is an $n$-vertex graph
$G$ with girth at least $g$ such that $\pi_f(G)\ge 3-\epsilon$.  His detailed
computations~\cite{Alon} with the error terms yield $\epsilon<n^{-c/g}$
for some positive constant $c$.

Graphs with good expansion properties are not planar.
The original paper~\cite{BCGMR} proved $\pi(G)\le2$ for every outerplanar
graph $G$, and hence also $\pi_f(G)\le2$.  Equality holds for outerplanar
graphs with $4$-cycles.  We suggest seeking sharp upper bounds for the family
of outerplanar graphs with girth at least $g$, and similarly for planar graphs
with girth at least $g$.  For the latter question, we suggest first studying
grids (cartesian products of two paths).

\section{Trees} \label{sec:trees}

Although $\lim_{g\to\infty}\FR g{g-2}=1$, it is not true that $\pi_f(G)=1$
whenever $G$ is a tree.  The graphs $G$ with $\pi_f(G)=1$ are just the graphs
with $\pi(G)=1$, as holds for every hypergraph covering parameter.
These graphs were characterized in BCGMR~\cite{BCGMR}.  Each component
is obtained from a path $P$ by adding independent vertices that have one
neighbor or two consecutive neighbors on $P$, but for any two consecutive
vertices on $P$ at most one common neighbor can be added.

This implies that the trees with fractional separation dimension $1$ are the
caterpillars.  We seek the sharpest general upper bound for trees.

\begin{theorem} \label{theorem:treebound}
$\pi_f(G) < \sqrt{2}$ when $G$ is a tree.
\end{theorem}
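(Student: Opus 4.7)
My plan is to invoke the matrix-game formulation of Proposition~\ref{game}: it suffices to exhibit a probability distribution on linear orderings of $V(T)$ such that every pair of nonincident edges is separated with probability strictly greater than $1/\sqrt{2}$. For a pair $(e, e') = (uv, xy)$, let $P^+ = u - v - w_1 - \cdots - w_k - x - y$ be the unique path in $T$ connecting them (with $k \ge 0$); separation requires that $\{u, v\}$ and $\{x, y\}$ occupy disjoint intervals of the ordering.

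My first step would be to analyze the natural random-root DFS distribution---pick $r \in V(T)$ uniformly at random and do a DFS from $r$ with independently uniform random orderings of children at each vertex. A routine case analysis on the LCA $\ell$ of $\{u, v, x, y\}$ in the rooted tree shows that $\ell$ must lie on $P^+$, that separation is certain when $\ell \in \{u, y, w_1, \ldots, w_k\}$, and that separation has probability exactly $1/2$ when $\ell \in \{v, x\}$ (because in that case the two relevant children of $\ell$ are ordered favorably with probability $1/2$ by symmetry). Hence the separation probability equals $1 - \tfrac{1}{2}\Pr[\ell \in \{v, x\}]$, which exceeds $1/\sqrt{2}$ as soon as $\Pr[\ell \in \{v, x\}] < 2 - \sqrt{2}$. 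This fails in general: in a spider with many long legs, $\Pr[\ell \in \{v, x\}]$ approaches $1$, and the random-root DFS alone does not suffice.

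To rescue the bound I would use a second distribution, modeled on the construction that realizes the sharp value $(3m - 1)/(4m - 2)$ on the subdivision of $K_{1, 2m}$ (mentioned in the introduction as the extremal example approaching $4/3$). Choose $c \in V(T)$ uniformly at random as a pivot, independently assign each component of $T - c$ to ``left'' or ``right'' by a fair coin, and output the ordering (left components in a uniformly random order), $c$, (right components in a uniformly random order); each component is internally ordered by a recursive application of the same construction. Case analysis on the location of $c$ relative to $P^+$ gives: $c \in \{w_1, \ldots, w_k\}$ yields separation with probability $1$ (the edges lie in distinct components of $T - c$, which are forced onto opposite or strictly ordered sides); $c \in \{v, x\}$ yields separation probability at least $3/4$ by a computation parallel to the subdivided-star example; and the remaining cases $c \in \{u, y\}$ and $c$ off $P^+$ reduce to the recursion within the subtree containing both edges.

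I anticipate the main obstacle to be the endpoint/off-path case, where the top-level pivot does not contribute directly and separation must be produced by the recursive step. Here one must verify that the internal ordering inside a subtree---obtained by recursion---still separates $e$ and $e'$ with probability at least $1/\sqrt 2$. The constant $\sqrt 2$ is consistent with a product-of-failures estimate: combining a top-level failure probability at most $1 - p$ with an independent recursive failure probability at most $1 - q$, with both $p, q$ bounded below by roughly $1/2$, yields overall failure at most $(1 - p)(1 - q) \le 1 - 1/\sqrt 2$. Averaging over $c$ then gives $\pi_f(T) \le \sqrt 2$, and strict inequality for any finite $T$ follows because at least one of the ``good'' cases in the analysis contributes an additive slack beyond the worst-case bound.
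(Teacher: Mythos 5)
Your overall game-theoretic framing is fine, but the second distribution you propose does not do what you claim, and the gap is exactly in the case you flag as the ``main obstacle.'' When the pivot is a far endpoint, say $c=u$, the pair $\{uv,xy\}$ does \emph{not} reduce to the recursive separation event inside the component containing $v,x,y$: since $c$ sits outside that block, separation occurs if and only if both $x$ and $y$ land on the side of $v$ away from $c$ in the block's internal ordering, so the conditional separation probability is $\tfrac12\Pr[x,y\ \text{on the same side of } v]$, which is at most $\tfrac12<\tfrac1{\sqrt2}$. There is also no valid ``product-of-failures'' estimate here: conditioned on $c=u$, the recursion is the only source of randomness, and it is being asked about a different event (the position of $x,y$ relative to the single vertex $v$), not about separating the pair. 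A concrete check kills the construction: for the path $u\!-\!v\!-\!x\!-\!y$, one computes $\Pr[x,y\ \text{same side of }v]=\tfrac34$ in the recursive ordering of $v\!-\!x\!-\!y$, so the conditional separation probability given $c=u$ (or $c=y$) is $\tfrac38$, and averaging over the four pivots gives overall separation probability $\tfrac14(\tfrac34+\tfrac34+\tfrac38+\tfrac38)=\tfrac9{16}<\tfrac1{\sqrt2}$. Thus your strategy fails to certify $\pi_f<\sqrt2$ even for $P_4$, a caterpillar with $\pi_f=1$, so the argument cannot be completed as sketched. Your closing claim that strict inequality follows from ``additive slack'' is also unsupported; one needs a reason the value cannot equal $\sqrt2$.

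The paper's proof uses a different random ordering that is engineered precisely to avoid this failure: root the tree, and place the children of each vertex \emph{immediately adjacent} to it, between the vertex and its parent with probability $1-\beta$ and on the far side with probability $\beta$ (children of the root go to either side with probability $\tfrac12$). This guarantees that only descendants of $u$ can lie between $u$ and a child of $u$, which makes one class of pairs separated with probability $1$, another with probability exactly $\beta$, and the last with probability $1-\tfrac12\bigl((1-\beta)^2+\beta^2\bigr)$; balancing the last two gives $\beta=\tfrac1{\sqrt2}$, hence $\pi_f(T)\le\sqrt2$, and strictness follows because $\pi_f$ is always rational. If you want to salvage a pivot-style recursion, you would need an analogous mechanism forcing the neighbor $v$ of the pivoted endpoint to sit adjacent (or nearly so) to the pivot, so that the endpoint case cannot drop below the target probability.
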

\begin{proof}
We construct a strategy for the ordering player to show that the separation
game has value at least $\FR1{\sqrt2}$.  Since $\pi_f(G)$ is rational,
the inequality is strict. 

Root $T$ at a vertex $v$.  For a vertex $u$ other than $v$, let $u'$ be the
parent of $u$.  We describe the strategy for the ordering player by an
iterative probabilistic algorithm that generates an ordering.  Starting with
$v$, we iteratively add the children of previously placed vertices according to
the following rules, where $\beta$ is a probability to be specified later.

\eject
\begin{enumerate}[label = (R\arabic*)]
\item The children of $v$ are placed before or after $v$ with probability
$\FR12$, independently.
\item The children of a non-root vertex $u$ or put between $u$ and its parent
$u'$ with probability $1-\beta$; they are place on the side of $u$ away from
$u'$ with probability $\beta$.
\item The children placed on each side of a vertex are placed immediately next
to it by a random permutation.
\end{enumerate}

Note that the resulting ordering has the following property:

\smallskip
\centerline{
$(*)$ Any vertex between a vertex $u$ and a child of $u$ is a descendant of $u$.
}
\smallskip

We must prove that the separation probability is at least $\FR1{\sqrt2}$ for
each pair of nonincident edges.  Given nonincident edges $ab$ and $cd$, let $w$
denote the common ancestor of these vertices that is farthest from the root.
We may assume $a=b'$ and $c=d'$. Without loss of generality, there are three
types of pairs, as shown in Figure~\ref{treefig}.

\begin{figure}[h]
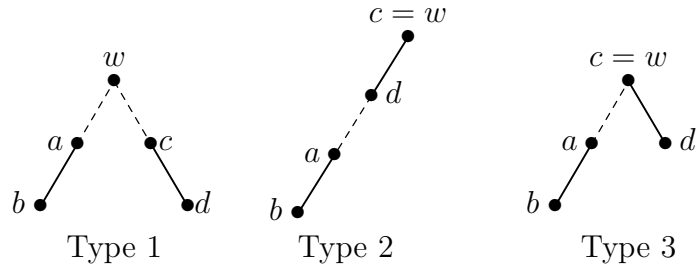

\gpic{
\expandafter\ifx\csname graph\endcsname\relax \csname newbox\endcsname\graph\fi
\expandafter\ifx\csname graphtemp\endcsname\relax \csname newdimen\endcsname\graphtemp\fi
\setbox\graph=\vtop{\vskip 0pt\hbox{%
    \graphtemp=.5ex\advance\graphtemp by 1.000in
    \rlap{\kern 0.115in\lower\graphtemp\hbox to 0pt{\hss $\bu$\hss}}%
    \graphtemp=.5ex\advance\graphtemp by 0.673in
    \rlap{\kern 0.308in\lower\graphtemp\hbox to 0pt{\hss $\bu$\hss}}%
    \graphtemp=.5ex\advance\graphtemp by 0.346in
    \rlap{\kern 0.500in\lower\graphtemp\hbox to 0pt{\hss $\bu$\hss}}%
    \graphtemp=.5ex\advance\graphtemp by 0.673in
    \rlap{\kern 0.692in\lower\graphtemp\hbox to 0pt{\hss $\bu$\hss}}%
    \graphtemp=.5ex\advance\graphtemp by 1.000in
    \rlap{\kern 0.885in\lower\graphtemp\hbox to 0pt{\hss $\bu$\hss}}%
    \special{pn 11}%
    \special{pa 115 1000}%
    \special{pa 308 673}%
    \special{fp}%
    \special{pa 692 673}%
    \special{pa 885 1000}%
    \special{fp}%
    \special{pn 8}%
    \special{pa 308 673}%
    \special{pa 500 346}%
    \special{pa 692 673}%
    \special{da 0.038}%
    \graphtemp=.5ex\advance\graphtemp by 0.231in
    \rlap{\kern 0.500in\lower\graphtemp\hbox to 0pt{\hss $w$\hss}}%
    \graphtemp=.5ex\advance\graphtemp by 1.231in
    \rlap{\kern 0.500in\lower\graphtemp\hbox to 0pt{\hss Type 1\hss}}%
    \graphtemp=.5ex\advance\graphtemp by 1.000in
    \rlap{\kern 0.000in\lower\graphtemp\hbox to 0pt{\hss $b$\hss}}%
    \graphtemp=.5ex\advance\graphtemp by 0.673in
    \rlap{\kern 0.192in\lower\graphtemp\hbox to 0pt{\hss $a$\hss}}%
    \graphtemp=.5ex\advance\graphtemp by 0.673in
    \rlap{\kern 0.769in\lower\graphtemp\hbox to 0pt{\hss $c$\hss}}%
    \graphtemp=.5ex\advance\graphtemp by 1.000in
    \rlap{\kern 0.962in\lower\graphtemp\hbox to 0pt{\hss $d$\hss}}%
    \graphtemp=.5ex\advance\graphtemp by 0.115in
    \rlap{\kern 2.038in\lower\graphtemp\hbox to 0pt{\hss $\bu$\hss}}%
    \graphtemp=.5ex\advance\graphtemp by 0.423in
    \rlap{\kern 1.846in\lower\graphtemp\hbox to 0pt{\hss $\bu$\hss}}%
    \graphtemp=.5ex\advance\graphtemp by 0.731in
    \rlap{\kern 1.654in\lower\graphtemp\hbox to 0pt{\hss $\bu$\hss}}%
    \graphtemp=.5ex\advance\graphtemp by 1.038in
    \rlap{\kern 1.462in\lower\graphtemp\hbox to 0pt{\hss $\bu$\hss}}%
    \special{pn 11}%
    \special{pa 2038 115}%
    \special{pa 1846 423}%
    \special{fp}%
    \special{pa 1654 731}%
    \special{pa 1462 1038}%
    \special{fp}%
    \special{pn 8}%
    \special{pa 1846 423}%
    \special{pa 1654 731}%
    \special{da 0.038}%
    \graphtemp=.5ex\advance\graphtemp by 0.000in
    \rlap{\kern 2.038in\lower\graphtemp\hbox to 0pt{\hss $c=w$\hss}}%
    \graphtemp=.5ex\advance\graphtemp by 1.231in
    \rlap{\kern 1.712in\lower\graphtemp\hbox to 0pt{\hss Type 2\hss}}%
    \graphtemp=.5ex\advance\graphtemp by 0.423in
    \rlap{\kern 1.962in\lower\graphtemp\hbox to 0pt{\hss $d$\hss}}%
    \graphtemp=.5ex\advance\graphtemp by 0.731in
    \rlap{\kern 1.538in\lower\graphtemp\hbox to 0pt{\hss $a$\hss}}%
    \graphtemp=.5ex\advance\graphtemp by 1.038in
    \rlap{\kern 1.346in\lower\graphtemp\hbox to 0pt{\hss $b$\hss}}%
    \graphtemp=.5ex\advance\graphtemp by 1.000in
    \rlap{\kern 2.808in\lower\graphtemp\hbox to 0pt{\hss $\bu$\hss}}%
    \graphtemp=.5ex\advance\graphtemp by 0.673in
    \rlap{\kern 3.000in\lower\graphtemp\hbox to 0pt{\hss $\bu$\hss}}%
    \graphtemp=.5ex\advance\graphtemp by 0.346in
    \rlap{\kern 3.192in\lower\graphtemp\hbox to 0pt{\hss $\bu$\hss}}%
    \graphtemp=.5ex\advance\graphtemp by 0.673in
    \rlap{\kern 3.385in\lower\graphtemp\hbox to 0pt{\hss $\bu$\hss}}%
    \special{pn 11}%
    \special{pa 2808 1000}%
    \special{pa 3000 673}%
    \special{fp}%
    \special{pa 3192 346}%
    \special{pa 3385 673}%
    \special{fp}%
    \special{pn 8}%
    \special{pa 3000 673}%
    \special{pa 3192 346}%
    \special{da 0.038}%
    \graphtemp=.5ex\advance\graphtemp by 0.231in
    \rlap{\kern 3.192in\lower\graphtemp\hbox to 0pt{\hss $c=w$\hss}}%
    \graphtemp=.5ex\advance\graphtemp by 1.231in
    \rlap{\kern 3.192in\lower\graphtemp\hbox to 0pt{\hss Type 3\hss}}%
    \graphtemp=.5ex\advance\graphtemp by 1.000in
    \rlap{\kern 2.692in\lower\graphtemp\hbox to 0pt{\hss $b$\hss}}%
    \graphtemp=.5ex\advance\graphtemp by 0.673in
    \rlap{\kern 2.885in\lower\graphtemp\hbox to 0pt{\hss $a$\hss}}%
    \graphtemp=.5ex\advance\graphtemp by 0.673in
    \rlap{\kern 3.500in\lower\graphtemp\hbox to 0pt{\hss $d$\hss}}%
    \hbox{\vrule depth1.231in width0pt height 0pt}%
    \kern 3.500in
  }%
}%
}
\caption{The three possible types of pairs.\label{treefig}}
\end{figure}

In Type 1, neither edge contains an ancestor of a vertex in the other edge.
Hence $(*)$ implies that no vertex of one edge can lie between the vertices
of the other edge.  Thus Type 1 pairs are separated with probability $1$.

In Type 2, both vertices in one edge are descendants of the vertices in the
other edge, say $a$ and $b$ below $d$.  By $(*)$, the pair fails to be
separated if and only if $a$ is between $c$ and $d$.  This occurs if and only
if the child of $d$ on the path from $d$ to $a$ is placed between $d$ and its
parent, $c$.  This occurs with probability $1-\beta$, so the separation
probability is $\beta$.

In Type 3, the vertices in $ab$ are below $c$ but not $d$.  Again separation
fails if and only if $a$ is between $c$ and $d$.  This requires $d$ and the
child of $c$ on the path to $a$ to be placed on the same side of $c$, after
which the probability of having $a$ between $c$ and $d$ is $\FR12$.  The
probability of having two specified children of $c$ on the same side of $c$ is
$(1-\beta)^2+\beta^2$ if $c\ne v$; it is $\FR12$ if $c=v$.
If $c=v$, then the separation probability is $\FR34$, greater than
$\FR1{\sqrt2}$.  If $c\ne v$, then the separation probability is 
$1-\FR12(1-\beta)^2-\FR12\beta^2$.

We optimize by solving $\beta=1-\FR12(1-\beta)^2-\FR12\beta^2$ and setting
$\beta=\FR1{\sqrt2}$.  Now each pair of nonincident edges is separated with
probability at least $\FR1{\sqrt2}$.
\end{proof}

If a root $v$ can be chosen in a tree $G$ so that the all pairs of Type 3
involve $v$, then in the proof of Theorem~\ref{theorem:treebound} setting
$\beta=\FR34$ yields $\pi_f(G)\le\FR 43$.  This proves the following corollary.  

\begin{cor} \label{cor:spidertoes}
$\pi_f(G) \le \FR43$ for any tree $G$ produced from a subdivision of a star
by adding any number of pendent vertices to each leaf.
\end{cor}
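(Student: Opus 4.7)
The plan is to reuse the random-ordering strategy from the proof of Theorem~\ref{theorem:treebound}, but with a deliberate root choice and an optimized value of the parameter $\beta$. First I would root $G$ at the center $v$ of the underlying star (before subdivision), and give names to the structural pieces of $G$: call the maximal paths from $v$ in the subdivided star the \emph{legs}, call their far endpoints the \emph{feet}, and call the pendant vertices appended at the feet the \emph{toes}.

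The structural heart of the argument is the claim that, with this rooting, every Type~3 pair has common ancestor $c$ equal to $v$. A Type~3 configuration requires $c$ to have two children $a$ and $d$ with $a$ itself possessing a further child $b$. Every interior vertex of a leg has exactly one child, so it cannot play the role of $c$. Every toe is a leaf, so it cannot either. A foot's children are exactly its toes, which are leaves, so a foot is excluded as well. This leaves $v$ as the only possible value of $c$.

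Given this observation, the separation-probability analysis from the proof of Theorem~\ref{theorem:treebound} simplifies. Type~1 pairs are separated with probability $1$; Type~2 pairs with probability $\beta$; and Type~3 pairs, all of which now satisfy $c=v$, with probability $\FR34$ independently of $\beta$. I would then set $\beta=\FR34$, which brings every separation probability up to at least $\FR34$. Proposition~\ref{game} then yields $\pi_f(G)\le\FR43$.

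The main point to be careful about is the structural observation itself. Its necessity is clear from the numerics: at $\beta=\FR34$ the generic Type~3 probability $1-\FR12(1-\beta)^2-\FR12\beta^2$ equals $\FR{11}{16}$, which is strictly below $\FR34$, so if even one Type~3 pair with $c\ne v$ were present the bound $\FR43$ would be lost. Verifying that in this family no interior leg vertex, foot, or toe can serve as the LCA of a Type~3 pair is exactly what secures the improved bound.
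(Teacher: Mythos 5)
Your proposal is correct and follows essentially the same route as the paper, which derives the corollary from the proof of Theorem~\ref{theorem:treebound} by choosing the root $v$ so that every Type~3 pair involves $v$ and then setting $\beta=\FR34$. One tiny imprecision: a Type~3 pair only requires the edge $ab$ to lie in the subtree of a child of $c$ other than $d$ (so $a$ need not itself be a child of $c$), but your exclusion of feet and leg-interior vertices still stands, since a foot's descendants are all leaves and an interior leg vertex has only one child.
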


The bound in Corollary~\ref{cor:spidertoes} cannot be improved.

\begin{prop} \label{prop:subdivided stars}
${\pi_f(K'_{1,n}) = \frac{4m-2}{3m-1}}$, where $m=\CL{n/2}$ and $K'_{1,n}$ is
the graph obtained from $K_{1,n}$ by subdividing every edge once.
\end{prop}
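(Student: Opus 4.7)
The plan is to identify the orbits of pair-player positions and then exactly match a lower bound from a single-orbit pair strategy with an upper bound from a symmetric ordering strategy. Root $K'_{1,n}$ at its center $v$, label the subdivision vertices $u_1,\ldots,u_n$ and the pendant leaves $w_1,\ldots,w_n$, with each $w_i$ adjacent only to $u_i$. The automorphism group is $S_n$, so $\p$ splits into two orbits: the $n(n-1)$ Type-A pairs $\{vu_i,u_jw_j\}$ with $i\ne j$, and the $\binom{n}{2}$ Type-B pairs $\{u_iw_i,u_jw_j\}$ with $i\ne j$. By Proposition~\ref{symmetry}, both players can optimize over strategies constant on orbits.

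For the lower bound, the pair player uses weight $\frac{1}{n(n-1)}$ on each Type-A pair and weight $0$ on every Type-B pair. The central combinatorial claim is that every linear ordering $\sigma$ separates at most $\binom{n}{2}+\FL{n/2}\CL{n/2}$ Type-A pairs. To prove this, I first characterize Type-A separation: the pair $\{vu_i,u_jw_j\}$ is separated by $\sigma$ iff $u_j$ and $w_j$ lie on a common side of $v$ in $\sigma$, and in addition either $u_i$ lies on the opposite side of $v$, or $u_i$ lies on the same side as $u_j,w_j$ but strictly closer to $v$ than both. With $n_L,n_R$ the numbers of $u$-vertices on the two sides of $v$, the cross-side contribution to the count is at most $2n_Ln_R$. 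Ranking the $u$'s on a single side by distance from $v$, the rank-$t$ vertex can play the role of $u_j$ for at most $t-1$ valid $u_i$'s, so the same-side contribution is at most $\binom{n_L}{2}+\binom{n_R}{2}$. Summing yields $n_Ln_R+\binom{n}{2}\le\FL{n/2}\CL{n/2}+\binom{n}{2}$; dividing by $n(n-1)$ and handling both parities of $n$ gives $\frac{3m-1}{4m-2}$, so $\pi_f(K'_{1,n})\ge\frac{4m-2}{3m-1}$.

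For the upper bound, the ordering player samples a permutation $\pi$ of $\{1,\ldots,n\}$ uniformly and produces the ordering
\[
w_{\pi(1)},u_{\pi(1)},w_{\pi(2)},u_{\pi(2)},\ldots,w_{\pi(k)},u_{\pi(k)},\,v,\,u_{\pi(k+1)},w_{\pi(k+1)},\ldots,u_{\pi(n)},w_{\pi(n)},
\]
where $k=\FL{n/2}$. Every such ordering makes $u_i$ and $w_i$ occupy adjacent positions, so the intervals $[u_i,w_i]$ are pairwise disjoint and every Type-B pair is separated. By the characterization above, the Type-A count attains the extremal $\binom{n}{2}+\FL{n/2}\CL{n/2}$. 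Because the distribution on orderings is $S_n$-invariant, every Type-A pair is separated with the same probability, equal to the maximum Type-A count divided by $n(n-1)$, which is again $\frac{3m-1}{4m-2}$. Hence $\pi_f(K'_{1,n})\le\frac{4m-2}{3m-1}$, and equality follows.

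The main hurdle is the upper bound on Type-A separations for arbitrary $\sigma$; in particular, justifying that displacing $w_j$ from $u_j$ cannot beat the count. When $w_j$ lies on the opposite side of $v$ from $u_j$, no Type-A pair indexed by $j$ is separated, so the entire contribution at $j$ vanishes. When $w_j$ lies between $v$ and $u_j$, the requirement ``$u_i$ closer to $v$ than both $u_j$ and $w_j$'' only tightens beyond $d(u_i)<d(u_j)$, so the rank bound $t-1$ remains valid. With these observations the same-side sum telescopes to $\binom{n_L}{2}+\binom{n_R}{2}$ irrespective of the $w$-placements, completing the argument.
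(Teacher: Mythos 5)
Your proof is correct and follows essentially the same route as the paper's: uniform pair-player weight on the $n(n-1)$ pairs $\{vu_i,u_jw_j\}$, the count $2n_Ln_R+\binom{n_L}{2}+\binom{n_R}{2}=\binom{n}{2}+n_Ln_R$ maximized at a balanced split, and a symmetric ``blocked'' ordering strategy attaining it. The only real difference is cosmetic: you bound arbitrary orderings directly via your separation characterization and the rank argument, whereas the paper first normalizes an ordering by moving each leaf next to its neighbor; both give the same extremal count $\binom{n}{2}+\lfloor n/2\rfloor\lceil n/2\rceil$ and hence the value $\frac{4m-2}{3m-1}$.
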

\begin{proof}
Form $K'_{1,n}$ from the star with center $v$ and leaves $\VEC y1n$
by introducing $x_i$ to subdivide $vy_i$, for $1\le i\le n$.
Let $X=\VEC x1n$.

If in some ordering a vertex of degree $1$ does not appear next to its
neighbor, then moving it next to its neighbor does not make any separated pair
unseparated.  Hence the ordering player should play only orderings in which
every vertex of degree $1$ appears next to its neighbor; it does not matter on
which side of its neighbor the vertex is placed.

Nonincident edges of the form $x_iy_i$ and $x_jy_j$ are always separated by any
ordering that puts $y_i$ next to $x_i$ for all $i$; the pair player will not
play these.  The remaining $n(n-1)$ pairs of nonincident edges have the form
$\{vx_i,x_jy_j\}$ and lie in a single orbit.  By Corollary~\ref{trans},
some optimal strategy for the pair player makes them equally likely.

An optimal strategy for the ordering player will thus make equally likely all
orderings obtained by permuting the positions of the pairs $x_ry_r$ within an
ordering that maximizes the number of separated pairs of the nontrivial form
$\{vx_i,x_jy_j\}$.  Such a pair is separated when $x_i$ and $x_j$ lie on
opposite sides of $v$ and when $x_i$ is between $v$ and $x_j$.  

To count such pairs, it matters only how many vertices of $X$ appear to the
left of $v$, since $y_i$ appears next to $x_i$ for all $i$.  If $k$ vertices
of $X$ appear to the left of $v$, then the count of separated nontrivial pairs
is $2k(n-k)+\CH k2+\CH{n-k}2$.  This formula simplifies to $\CH{n}2+k(n-k)$,
which is maximized only when $k\in\{\CL{\FR n2},\FL{\FR n2}\}$.

Thus the ordering player puts $v$ in the middle, $\FL{\FR n2}$ vertices of $X$
on one side, and $\CL{\FR n2}$ vertices of $X$ on the other side.
Whether $n$ is $2m$ or $2m-1$, the ratio of $\CH n2+\FL{\FR{n^2}4}$ to
$n(n-1)$ simplifies to $\FR{3m-1}{4m-2}$, as desired.
\end{proof}

\section{Complete Multipartite Graphs}\label{multip}

To prove that always $\pi_f(\Knnt)=\FR{3m}{m+1}$, where $m=\FL{n/2}$,
we need also to compute $\pi_f(K_{m+1,m})$.  This is the special case $q=1$
of our next theorem.  We postponed it because the counting argument for the
generalization is more technical than our earlier arguments.

\begin{theorem}\label{theorem:complete bipartite}
$\pi_f(K_{m+1,qm})=3\left(1-\FR{(q+1)m-2}{(2m+1)mq-m-2}\right)$
for $m,q\in\NN$ with $mq>1$.
\end{theorem}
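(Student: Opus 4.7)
The plan is to apply Corollary~\ref{trans}: the automorphism group $S_{m+1}\times S_{qm}$ of $K_{m+1,qm}$ acts transitively on pairs of nonincident edges (any such pair uses two distinct vertices from each part, and permutations within each part can send any such configuration to any other). This gives $\pi_f(K_{m+1,qm})=N/S$, where $N=2\binom{m+1}{2}\binom{qm}{2}=\frac{qm^2(m+1)(qm-1)}{2}$ is the total number of nonincident edge pairs and $S$ is the maximum number separated by any single vertex ordering.

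Because $S$ is invariant under permuting $X$-vertices and under permuting $Y$-vertices, it depends only on the \emph{shape} $(n_0,\ldots,n_{m+1})$ of the ordering, where $n_k$ counts the $Y$-vertices in the $k$-th gap between consecutive $X$-vertices ($n_0$ before all $X$-vertices and $n_{m+1}$ after). A direct case analysis shows that a pair $\{x_iy_A,x_jy_B\}$ with $x_i$ preceding $x_j$ is separated iff $y_A$ precedes $x_j$, $x_i$ precedes $y_B$, and $y_A$ precedes $y_B$. Summing over unordered $Y$-pairs produces the clean formula
\[
S \;=\; \sum_{\{y,y'\}} c\bigl(\min(L(y),L(y')),\max(L(y),L(y'))\bigr),
\]
where $L(y)\in\{0,\ldots,m+1\}$ is the slot of $y$ and $c(\alpha,\beta)=\binom{m+1}{2}-\binom{\alpha}{2}-\binom{m+1-\beta}{2}$ counts the admissible $X$-pairs $\{i,j\}$ (those with $i\le\beta$, $j>\alpha$, $i<j$).

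The heart of the argument is to show that $S$ is maximized by the \emph{balanced} shape $n_0=n_{m+1}=0$, $n_1=\cdots=n_m=q$. I would prove this by a transfer/smoothing argument: the weight $c(\alpha,\beta)$ is strictly decreasing in $\alpha$ for $\alpha\ge1$ and strictly increasing in $\beta$ for $\beta\le m$, so transferring a $Y$-vertex from a boundary slot toward the interior, or from an overfull interior slot to an underfull one, never decreases $S$; iterating such moves produces a chain to the balanced shape. Substituting the balanced shape into the formula and carrying out the double sum (splitting into same-slot and different-slot contributions) yields $S=\frac{qm(m+1)[(2m+1)qm-m-2]}{12}$.

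Finally, the ratio $N/S$ simplifies to $\frac{6m(qm-1)}{(2m+1)qm-m-2}$, and routine algebra rewrites this as $3\bigl(1-\frac{(q+1)m-2}{(2m+1)mq-m-2}\bigr)$, giving the desired formula. I expect the main obstacle to be the optimality claim for the balanced shape: unlike in the proof of Theorem~\ref{theorem:balanced bipartite}, where a single adjacent transposition gives a clean pairwise comparison, here the quadratic form $S(\{n_k\})$ is neither concave nor convex on the simplex $\sum n_k=qm$, so a single transfer of one $Y$-vertex affects contributions from every other slot pair. The transfer analysis must therefore track these global effects, exploiting the monotonicity of $c$ in each coordinate together with the convexity of $\binom{\cdot}{2}$ to certify that no shape exceeds the value achieved by the balanced one.
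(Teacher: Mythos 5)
Your setup coincides with the paper's: a single orbit of nonincident pairs, Corollary~\ref{trans}, the reduction of an ordering's score to its $X/Y$ pattern, and the count $S=\frac{1}{12}qm(m+1)\left[(2m+1)qm-m-2\right]$ for the balanced shape (which matches the paper's $2p\binom{m+1}{2}\binom{mq}{2}$), so the only issue is the step you yourself flag: optimality of the balanced shape. There the proposal has a genuine gap, because the smoothing lemma you invoke is false as stated. Take $m=4$, $q=10$, and the shape $n_2=11$, $n_4=29$ with all other slots empty; slot $2$ is overfull and slot $3$ is underfull, yet moving one $Y$-vertex from slot $2$ to slot $3$ changes $S$ by $10\,[c(2,3)-c(2,2)]+29\,[c(3,4)-c(2,4)]=10(8-6)+29(7-9)=-38<0$. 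The monotonicity of $c(\alpha,\beta)$ in $\alpha$ and $\beta$ rewards pairs whose two slots are far apart, so it cannot by itself certify that moves toward the balanced shape are safe; the off-diagonal interaction with mass sitting beyond the target slot can dominate, as above. (The boundary moves are fine, since $c(0,\ell)=c(1,\ell)$ for $\ell\ge1$, which mirrors the paper's first normalization.) There is also a secondary issue: the maximum is attained by several shapes (for $m=q=2$, both $(0,2,2,0)$ and $(1,1,1,1)$ give $16$), so even a corrected "never decreases" lemma would need care with equality moves and termination before "iterating such moves produces a chain to the balanced shape" is justified.

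For comparison, the paper closes this step with a specific adjacent exchange rather than a global overfull-to-underfull transfer. After normalizing so that the ordering begins with $x_0$ and ends with $x_m$ (each such swap loses no separated pair), it takes the \emph{least} index $j$ such that more than $qj$ vertices of $Y$ precede $x_j$ and swaps $x_j$ with the $Y$-vertex immediately before it; with $r$ the number of preceding $Y$-vertices, exactly $j(mq-r)$ pairs are lost and $(r-1)(m-j)$ gained, and minimality of $j$ together with $r>qj$ and $j<m$ makes the net change $m(r-jq)-(m-j)$ strictly positive. To repair your version you would need an analogous statement in the shape language -- e.g., that any non-optimal shape admits some transfer between \emph{adjacent} slots, chosen at a first violating index, that strictly increases $S$ -- which is essentially the paper's swap recast in your notation; the lemma you state does not deliver it.
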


\begin{proof}
Note that
$3\left(1-\FR{(q+1)m-2}{(2m+1)mq-m-2}\right)=\FR{6m(mq-1)}{(2m+1)mq-m-2}$.
Let $p=\FR{(2m+1)mq-m-2}{6m(mq-1)}$.  
The pairs in $\p$ all lie in the same orbit, so Corollary~\ref{trans}
applies, and the pair player can make all $2\CH{m+1}2\CH{mq}2$ pairs in $\p$
equally likely.  It suffices to show that the maximum number of pairs separated
by any ordering is $2p\CH{m+1}2\CH{mq}2$.  The proof is similar to that of
Theorem~\ref{theorem:balanced bipartite}.

Let the parts of $K_{m+1,qm}$ be $X$ and $Y$, with $|X| = m+1$ and $|Y| =qm$. 
Let $\sigma$ be an ordering $\VEC v0{(q+1)m}$ such that $v_i\in X$ if and only
if $i\equiv 0\mod(q+1)$.  The ordering player will in fact make all such
orderings equally likely.  We show that $\sigma$ separates
$2p\CH{m+1}2\CH{mq}2$ pairs and that no ordering separates more.

Let $X=\{\VEC x0m\}$, indexed in order of appearance in $\sigma$, and similarly
let $Y=\{\VEC y1{qm}\}$.  Let $B_0=\{x_0\}$, and for $1\le i\le m$ let $B_i$
consist of $\{y_{q(i-1)+1},\ldots,y_{qi},x_i\}$.  To count pairs in $\p$
separated by $\sigma$, we consider which blocks contain the vertices used.

If the indices are $i,j,k,l$ with $1\le i < j < k < l \le m$, then one edge
consists of $x_i$ or $x_j$ and a $Y$-vertex from the other block among
$\{B_i,B_j\}$, and similarly for $\{B_k,B_l\}$.  Hence there are
$4q^2\binom{m}{4}$ such pairs.  If $i=0$, then we must use $x_0$, and there are
$2q^2\CH m3$ such pairs.

If the indices are $i,j,k$ with $1\le i<j<k\le m$, then we use two vertices
from one block.  If we use two in $B_i$, then other edge uses $x_j$ or $x_k$
and a $Y$-vertex from the remaining block, yielding $2 q^2$ separated pairs.
Similarly, $2q^2$ separated pairs use two vertices in $B_k$.  Two vertices
used from $B_j$ may both be from $Y$ or may include $x_j$.  In the first case
$x_i$ and $x_k$ are used, while in the second case $x_j$ and $x_i$ are used;
thus the vertices from $Y$ can be chosen in $\CH q2+q^2$ ways.  Hence for such
index choices a total of $(\CH q2+5q^2)\CH m3$ pairs are separated.

If the indices are $0,j,k$ with $1\le j<k\le m$, then $x_i$ is used.
If $x_j$ is used, then there are $q^2$ ways to complete the pair of separated
edges, and if $x_k$ is used then there are $q^2+\CH q2$ ways to complete it.
Hence this case contributes $(\CH q2+2q^2)\CH m2$ pairs.

If the indices are $i$ and $j$ with $1\le i < j\le m$, then either we use two
vertices from each of $B_i$ and $B_j$ (with one edge within each block) or
we use three vertices from $B_j$ and only the vertex $x_i$ from $B_i$.  This
yields $(q^2+ \binom{q}{2}) \binom{m}{2}$ separated pairs.  If $i=0$, then we
must use $x_0$ and three vertices from $B_j$, for a total of $\binom{q}{2} m$
pairs. 

Thus $\sigma$ separates
$[4q^2]\CH m4+[7q^2+\CH q2]\CH m3+[3q^2+2\CH q2]\CH m2+\CH q2m$ pairs.
Direct computation shows that this equals $2p\CH{m+1}2\CH{mq}2$.
In particular, since $p=\FR{(2m+1)mq-m-2}{6m(mq-1)}$, the formula
$2p\CH{m+1}2\CH{mq}2$ simplifies to $\FR1{12}[(2m+1)mq-m-2](m+1)mq$,
and indeed factoring $\FR1{12}mq$ out of the number of pairs separated leaves
$[(2m+1)mq-m-2](m+1)$.

It remains to show that no ordering separates more pairs than the orderings of
this type.  Let $\sigma$ be an ordering not of this type.  Index $X$ and $Y$ as
before.  If $\sigma$ does not start with $x_0$, then let $y$ be the vertex
immediately preceeding $x_0$.  Form $\sigma'$ from $\sigma$ by exchanging the
positions of $y$ and $x_0$.  Since no pair of the form $x'y,x_0y'$ is separated
by $\sigma$, every pair separated by $\sigma$ is also separated by $\sigma'$.

Hence we may assume by symmetry that $\sigma$ starts with $x_0$ and ends with
$x_m$.  If $\sigma$ does not have the desired form, then by symmetry there is
a least index $j$ such that more than $qj$ vertices of $Y$ precede $x_j$, while
fewer than $q(m-j)$ follow $x_j$.  Form $\sigma'$ by exchanging the positions
of $x_j$ and the vertex $y$ immediately preceding it in $\sigma$.
Let $r$ be the number of vertices of $Y$ preceding $x_j$
The number of pairs separated by $\sigma$ but not $\sigma'$ is $j(mq-r)$,
while the number separated by $\sigma'$ but not $\sigma$ is $(r-1)(m-j)$.
The difference is $m(r-jq)-(m-j)$.  Since $r>qj$ and $j<m$, the difference
is positive, and $\sigma'$ separates more pairs than $\sigma$.
\end{proof}

Since $\FR1p=\FR{6(mq-1)}{(2m+1)q-m-2/m}\le \FR{6m}{2m+1}=3(1-\FR1{2m+1})$,
with equality only when $m=1$, always $\pi_f(K_{m,r})$ is bounded away from $3$
by a function of $m$.  In particular, having $\pi_f$ tend to $3$ on a sequence
of bipartite graphs requires the sizes of both parts to grow.

We expect that $\Knnt$ maximizes $\pi_f$ among $n$-vertex bipartite graphs.
By monotonicity, the maximum occurs at $K_{k,n-k}$ for some $k$.  We have
$\pi_f(K_{m,m})=\pi_f(K_{m+1,m})=\FR{3m}{m+1}$.  For unbalanced instances with
$2m+1$ vertices (assuming integrality of ratios for simplicity), 
Theorem~\ref{theorem:complete bipartite} yields
$\pi_f(K_{\FR{2m}{q+1}+1,\FR{2qm}{q+1}})$.  The
value is highest for the balanced case.

It would be desirable to have a direct argument showing that moving a vertex
from the larger part to the smaller part in $K_{k,n-k}$ increases
$\pi_f$ when $k\le n/2-1$.  This would prove
Conjecture~\ref{conjecture:bipartite}.  However, the statement surprisingly
is not true in general for complete tripartite graphs.  Computation has shown
$\pi_f(K_{m+2,m,m})>\pi_f(K_{m+1,m+1,m})$ when $2\le m\le 4$.  Even more
surprising, by computing the values of $\pi_f$ for $K_{m,m,m}$ and $K_{1,m,m}$,
we obtain $\pi_f(K_{1,(n-1)/2,(n-1)/2})>\pi_f(K_{n/3,n/3,n/3})$ when $n$ is an
odd multiple of $3$.  This follows from the remaining results in this section
and motivates our Conjecture~\ref{conjecture:tripartite}.

\begin{theorem}\label{theorem:balanced tripartite}\label{kmmm}
$\pi_f(K_{m,m,m}) = \FR{6m}{2m+1}$ for $m \ge 2$.
\end{theorem}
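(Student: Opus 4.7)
The pairs of nonincident edges in $K_{m,m,m}$ form two orbits under $\mathrm{Aut}(K_{m,m,m})$: Orbit~1 contains pairs whose edges lie between the same two parts ($6\binom{m}{2}^2$ pairs), while Orbit~2 contains pairs whose edges span all three parts with one part doubled ($6m^2\binom{m}{2}$ pairs). Since $\p$ is not a single orbit, Corollary~\ref{trans} does not apply directly, but Proposition~\ref{symmetry} lets us restrict both players to strategies constant on each orbit.

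For the upper bound $\pi_f(K_{m,m,m}) \le \FR{6m}{2m+1}$, the plan is to have the ordering player play uniformly over all \emph{balanced orderings}, where the $3m$ vertices split into $m$ consecutive triples with each triple containing one vertex from each of $X,Y,Z$. Equivalently, pick independent uniformly random bijections from each of $X,Y,Z$ to $[m]$ (the triple-assignments) together with an independent uniformly random within-triple permutation for each triple. By automorphism symmetry, the separation probability depends only on the orbit of the pair. A case analysis, conditioning on the triple-locations of the four involved vertices and then on the within-triple permutations, should yield separation probability exactly $\FR{2m+1}{6m}$ for every Orbit~2 pair and a strictly larger value for every Orbit~1 pair; the pair player's best response then plays Orbit~2 pairs uniformly, achieving game value $\FR{2m+1}{6m}$.

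For the matching lower bound, have the pair player play Orbit~2 pairs uniformly. It suffices to show that every ordering $\sigma$ separates at most $\FR{2m+1}{6m} \cdot 6m^2\binom{m}{2} = \FR{1}{2}\,m^2(m-1)(2m+1)$ pairs in Orbit~2. In the spirit of the swap arguments used in Theorems~\ref{theorem:balanced bipartite} and~\ref{theorem:complete bipartite}, I would show that if $\sigma$ is not balanced, then some transposition of two adjacent vertices does not decrease the number of separated Orbit~2 pairs; iterating this reduces the problem to balanced orderings, where the Orbit~2 count can be evaluated explicitly and attains the claimed bound.

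The main obstacles are twofold. First, the case-by-case probability computation for the upper bound: coincident triple positions must be handled via within-triple tie-breaking, and one must separately verify both the exact value $\FR{2m+1}{6m}$ for Orbit~2 pairs and the lower bound $\FR{2m+1}{6m}$ for Orbit~1 pairs. Second, identifying the correct family of local swaps: since we are bounding $S_2(\sigma)$ specifically, the swap argument must track how transpositions affect only Orbit~2 counts, which is more delicate than the single-orbit bipartite case where all nonincident pairs behave identically.
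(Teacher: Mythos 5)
Your proposal follows essentially the same route as the paper: the ordering player plays uniformly over balanced (triple-blocked) orderings, under which the T-pairs (your Orbit~2) are separated with probability exactly $\frac{2m+1}{6m}$ while D-pairs get the larger fraction $\frac{m+1}{3m}$, and the matching bound comes from the pair player playing T-pairs uniformly together with an adjacent-transposition argument showing that unbalanced orderings are suboptimal for separating T-pairs; the deferred counting (by how many blocks the four vertices hit) does work out as you predict. The one refinement worth adopting from the paper is that the swap exchanging $x_i$ with an immediately preceding $y_j$ ($j>i$) \emph{strictly} increases the number of separated T-pairs (by $m(j-i)$), so the maximum is attained only at balanced orderings and no iteration or termination/potential-function argument is needed, whereas your ``does not decrease, then iterate'' version would require one.
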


\begin{proof}
There are two types of pairs in $\p$: those with endpoints in two parts, called
\emph{double-pairs} or \emph{D-pairs}, and those with endpoints in all three
parts, called \emph{triple-pairs} or \emph{T-pairs}.  Within these two types of
pairs in $\p$, any pair can be mapped to any other pair via an automorphism, so
by Corollary~\ref{trans} some optimal strategy for the pair player makes
D-pairs equally likely and makes T-pairs equally likely.  

Let the parts of $K_{m,m,m}$ be $X$, $Y$, and $Z$.  Let $\sigma$ be an ordering
$v_1,\ldots,v_{3m}$ such that each triple $\{v_{3i-2},v_{3i-1},v_{3i}\}$
consists of one vertex from each part, for $1 \le i \le m$.
The ordering player will make all such orderings equally likely.
The restrictions of such orderings to two parts are the orderings used in
Theorem~\ref{theorem:balanced bipartite}, which separate the fraction
$\FR{m+1}{3m}$ of the D-pairs.

We show that each such ordering separates the fraction $\FR{2m+1}{6m}$ of the
T-pairs.  This fraction is smaller than $\FR{m+1}{3m}$.  Hence this strategy
shows that the separation game has value at least $\FR{2m+1}{6m}$.  By making
the T-pairs equally likely, the pair player establishes equality if also no
other ordering separates more T-pairs.

For use in the next theorem, we distinguish each T-pair as a $W$-pair, for
$W\in\{X,Y,Z\}$, when $W$ is the part contributing two vertices to the pair.
Furthermore, with $w\in\{x,y,z\}$, we index $W$ as $\VEC w1m$ in order of 
appearance in $\sigma$.  Let the {\it block} $B_i$ be
$\{v_{3i-2},v_{3i-1},v_{3i}\}$, so $B_i=\{x_i,y_i,z_i\}$ for $1\le i\le m$,
though $\{x_i,y_i,z_i\}$ may appear in any order in $\sigma$.  The vertices of
a T-pair separated by $\sigma$ may use two, three, or four indices in
$\{1,\dots,m\}$.  In each case, let $W$ be the part contributing a vertex to
each edge of the pair.

T-pairs hitting $B_i,B_j,B_k,B_l$ with $i<j<k<l$ consist of one edge in
$B_i\cup B_j$ and the other in $B_k\cup B_l$.  We can choose the blocks for the
two vertices of $W$ in four ways ($B_i$ or $B_j$, and $B_j$ or $B_k$), and then
we just choose which of the other two parts finishes the first edge.  Hence
there are $8\CH m4$ such $W$-pairs for each $W$, together $24\CH m4$ such
T-pairs.

Pairs hitting only $B_i,B_j,B_k$ with $i<j<k$ consist of one edge in
$B_i\cup B_j$ and the other in $B_j\cup B_k$, with care in making a separated
pair needed when $B_j$ contributes two vertices.  If the repeated part $W$
contributes $w_i$ and $w_k$, then there are five ways to complete the pair, one
with two vertices from $B_j$ and two each having an edge within $B_i$ or $B_j$.
If $w_i$ and $w_j$ are used, then there are two $W$-pairs having an edge within
$B_i$.  The number of $W$-pairs using a second vertex from $B_j$ is $t-1$ when
$w_j$ is the $t$th vertex of $B_j$ in $\sigma$, and none having two vertices
from $B_k$.  When $w_j$ and $w_k$ are used, the contributions depending on
the position of $w_j$ are reversed.  Hence in each case 11 $W$-pairs are
separated, for a total of $33\CH m3$ T-pairs.

%Pairs hitting only $B_i,B_j,B_k$ with $i<j<k$ have two vertices with the same
%index; call it $r$.  There are three ways to pick the part not using index $r$.
%When $r\in\{i,k\}$, for example using $x_iy_i$, there are two ways to choose
%the part repeated in the other edge (here $X$ or $Y$) and two ways to choose
%which index it appears with (here $j$ or $k$).  If two vertices with index $j$
%are used, say from parts $U$ and $W$ in order, then we must use a vertex with
%index $i$ not from part $U$ and a vertex with index $k$ not from part $W$, but
%choosing part $W$ and part $U$ would complete a D-pair.  Hence there are
%$(2\cdot 4+3)3\CH m3$ such pairs.

A separated T-pair hitting only $B_i$ and $B_j$ uses $w_i$ and $w_j$.
Picking one additional vertex from each block yields two separated $W$-pairs.
There is also one separated $W$-pair with three vertices in $B_i$ if $w_i$ is
not the last vertex of $B_i$, and one with three vertices in $B_j$ if $w_j$ is
not the first vertex of $B_j$.  Summing over $W$ yields $10\CH m2$ T-pairs of
this type.

%A separated T-pair hitting only $B_i$ and $B_j$ must use three vertices with
%one index or two vertices with each index.  In the first case, the part used
%with the other index must differ from the part of the nearest vertex whose
%index contributes three vertices: $2\cdot 2$ choices.  In the second case, we
%cannot use the same two parts for each index: $3\cdot 2$ choices.  Hence there
%are $10\CH m2$ such choices.

In total $\sigma$ separates $24\CH m4+33\CH m3+10\CH m2$ T-pairs.  This sum
equals $\FR{2m+1}m\CH m2$.  Altogether there are $6m^2\CH m2$ T-pairs,
so the fraction of them separated is $\FR{2m+1}{6m}$, as desired.

For an ordering $\sigma$ not of the specified form, index the vertices of each
part in increasing order in $\sigma$.  Avoiding the specified form means that
some vertex precedes another vertex with a lesser index.  By symmetry, we may
assume that $y_j$ appears immediately before $x_i$ in $\sigma$ for some $i$ and
$j$ with $j > i$.  Let $k$ be the number of vertices of $Z$ appearing before
$y_j$.

Form $\sigma'$ from $\sigma$ by interchanging the positions of $y_j$ and $x_i$.
Any T-pair separated by exactly one of $\sigma$ and $\sigma'$ has $x_i$ and
$y_j$ as endpoints of the two distinct edges.  Considering whether a vertex of
$Z$ is used to complete the first, second, or both edges, there are
$k(m-j)+(i-1)(m-k)+k(m-k)$ such T-pairs in $\sigma$ and
$k(m-i)+(j-1)(m-k)+k(m-k)$ such T-pairs in $\sigma'$.
The difference is $m(j-i)$.  Since $m\ge2$ and $j>i$, the comparison
shows that $\sigma'$ separates strictly more T-pairs than $\sigma$.
\end{proof}

We also compute the fractional separation dimension of $K_{m+1,m,m}$.
As with $K_{m+1,m}$, the extra vertex imposes no extra cost.

\begin{theorem}\label{theorem:mod1 tripartite}\label{km1mm}
$\pi_f(K_{m+1,m,m}) = \FR{6m}{2m+1}$ for $m \ge 2$.
\end{theorem}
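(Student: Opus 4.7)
The lower bound $\pi_f(K_{m+1,m,m}) \ge \FR{6m}{2m+1}$ is immediate from monotonicity together with Theorem~\ref{theorem:balanced tripartite}, since $K_{m,m,m}\esub K_{m+1,m,m}$. The work lies entirely in establishing a matching upper bound.

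The plan is to adapt the block-form strategy used in the proof of Theorem~\ref{theorem:balanced tripartite}. Let the parts of $K_{m+1,m,m}$ be $X, Y, Z$ with $|X|=m+1$, and let $x_0$ be a distinguished vertex of $X$. The ordering player plays uniformly at random over all orderings of the form $x_0, v_1, \ldots, v_{3m}$ in which each triple $\{v_{3i-2}, v_{3i-1}, v_{3i}\}$ contains one vertex of $X\setminus\{x_0\}$, one of $Y$, and one of $Z$, with all internal orderings of each triple equally likely; we also average over the choice of $x_0 \in X$. The goal is to show that under this strategy every pair in $\p$ is separated with probability at least $\FR{2m+1}{6m}$, which by Proposition~\ref{game} will imply $\pi_f(K_{m+1,m,m}) \le \FR{6m}{2m+1}$.

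By Proposition~\ref{symmetry} together with the $Y \leftrightarrow Z$ symmetry of $K_{m+1,m,m}$, it suffices to check one representative from each orbit of $\p$. These orbits are: D-pairs inside the $X$-$Y$ (equivalently $X$-$Z$) subgraph, D-pairs inside the $Y$-$Z$ subgraph, T-pairs with two endpoints in $X$, and T-pairs with two endpoints in $Y$ (equivalently $Z$). For $Y$-$Z$ D-pairs the restriction of the strategy to $Y\cup Z$ is exactly the balanced bipartite strategy from Theorem~\ref{theorem:balanced bipartite}, which separates each such pair with probability $\FR{m+1}{3m}$, and $\FR{m+1}{3m} \ge \FR{2m+1}{6m}$. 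For the remaining three orbits, pairs whose four relevant endpoints all lie among the blocks $B_1,\ldots,B_m$ are handled exactly by the T-pair and D-pair counts appearing in the proof of Theorem~\ref{theorem:balanced tripartite}; it remains to analyze pairs having $x_0$ as an endpoint, which I plan to treat by viewing $x_0$ as a degenerate block $B_0$ and carrying out the same case split over how many blocks contain the four endpoints.

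The main obstacle will be verifying that the orbits involving $X$ -- in particular the $T_X$-orbit and the $X$-$Y$ D-pair orbit -- still clear the threshold $\FR{2m+1}{6m}$, since attaching $x_0$ to the front simultaneously creates new separated pairs and adds new pairs the player must account for. An exchange argument in the spirit of the one in Theorem~\ref{theorem:complete bipartite} (swapping $x_0$ with its immediate neighbor, or swapping a misplaced vertex into its correct block) will confirm that orderings of the described block form are optimal, so no ordering separates a strictly larger fraction of any orbit than the strategy does. Combining these orbit-wise separation probabilities with Proposition~\ref{game} yields $\pi_f(K_{m+1,m,m}) \le \FR{6m}{2m+1}$, matching the lower bound.
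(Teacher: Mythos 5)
Your lower bound via monotonicity is exactly what the paper does, and the overall plan (block orderings, orbit-by-orbit verification via Corollary~\ref{trans}/Proposition~\ref{symmetry}, D-pairs reducing to the bipartite theorems) is also the paper's plan. But the specific strategy you propose does not work, and the part you defer as ``the main obstacle'' is precisely where it fails. You take \emph{all} internal orderings of each triple to be equally likely (and place the distinguished vertex $x_0$ at one end). The paper instead uses a deliberately restricted family: the $X$-vertex comes \emph{first} in every block $(x_i,y_i,z_i)$, the extra $X$-vertex sits at the \emph{end}, and only the global $Y\leftrightarrow Z$ switch is averaged. That asymmetry is not a convenience; it is what makes the $X$-pair and $Y/Z$-pair orbits come out to exactly $\frac{2m+1}{6m}$, with no slack. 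Under your uniform-internal-order strategy the T-pairs involving $x_0$ are separated too rarely. Concretely, for $m=2$: the $X$-pairs not involving $x_0$ are separated in expectation $2+\frac23+\frac23=\frac{10}{3}$ times, and those involving $x_0$ only $\frac{16}{3}$ times (out of $16$), for a total of $\frac{26}{3}$ out of the $m^3(m+1)=24$ $X$-pairs, i.e.\ fraction $\frac{13}{36}<\frac{5}{12}=\frac{2m+1}{6m}$; the $Y$/$Z$ orbit similarly gets $\frac{29}{72}<\frac{5}{12}$. (The paper's restricted orderings separate $10$ of the $24$ $X$-pairs and $10$ of the $24$ $Y$/$Z$-pairs, hitting the bound exactly.) So carrying out your deferred ``degenerate block $B_0$'' count would not confirm the threshold; it would refute your strategy. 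A related smaller point: even for pairs confined to $B_1,\dots,B_m$, you cannot simply quote the counts from Theorem~\ref{kmmm}, because there only the \emph{total} over the three $W$-types ($24\CH m4+33\CH m3+10\CH m2$) was shown to be order-independent, while here $X$-pairs and $Y/Z$-pairs lie in different orbits and the split (e.g.\ $3/4/3$ versus $10/3$ each in the two-block case) does depend on the internal block orders.

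The closing exchange argument also does not serve the purpose you assign to it. For the upper bound you only need that your distribution separates every orbit with probability at least $\frac{2m+1}{6m}$; proving that block-form orderings are ``optimal'' among all orderings neither implies that bound nor is it needed anywhere, since the matching lower bound already comes from monotonicity (the paper makes this explicit: unlike Theorem~\ref{kmmm}, no optimality argument is used here). The fix is to drop the extra randomness: fix the internal order of each block with the $X$-vertex first, put the $(m+1)$-st $X$-vertex at the end, average only over the $Y\leftrightarrow Z$ switch, and then do the exact count of $X$-pairs ($8\CH m4+15\CH m3+8\CH m2+m$ out of $m^3(m+1)$) and of $Y$- and $Z$-pairs ($16\CH m4+26\CH m3+10\CH m2$ out of $2m^2(m^2-1)$), each of which equals exactly $\frac{2m+1}{6m}$.
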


\begin{proof}
Let the parts of $K_{m+1,m,m}$ be $X$, $Y$, and $Z$ with $|X| = m+1$.  By
monotonicity, $\pi_f(K_{m+1,m,m}) \ge \pi_f(K_{m,m,m}) = \FR{6m}{2m+1}$.
To prove equality, it suffices to give a strategy for the ordering player that
separates any pair in $\p$ with probability at least $\FR{2m+1}{6m}$.
Given an ordering $\sigma$ as $\VEC v1{3m+1}$, let
$B_i=\{v_{3i-2},v_{3i-1},v_{3i}\}$ for $1\le i\le m$ as in Theorem~\ref{kmmm}.
Use $W\in\{X,Y,Z\}$ and $W=\{\VEC w1t\}$ as before, indexed as ordered in
$\sigma$.  The ordering player makes equally likely all orderings such that
$(v_{3i-2},v_{3i-1},v_{3i})=(x_i,y_i,z_i)$ in order, with $x_{3m+1}$ at the
end, and all those that switch $Y$ and $Z$.  By Corollary~\ref{trans}, it
suffices to show that $\sigma$ separates at least the fraction $\FR{2m+1}{6m}$
of the pairs in each orbit.

For the pairs in $\p$ with endpoints in only two parts, the number of pairs
separated by $\sigma$ depends only on the restriction of $\sigma$ to those
parts.  The restriction is precisely an ordering used in
Theorem~\ref{theorem:balanced bipartite} or Theorem~\ref{theorem:complete
bipartite}.  There we showed that the fraction of such pairs separated is
$\frac{m+1}{3m}$, which is larger than $\FR{2m+1}{6m}$.

In remains to consider the T-pairs.  As in Theorem~\ref{kmmm}, classify these
as $W$-pairs for $W\in\{X,Y,Z\}$.  The $Y$-pairs and $Z$-pairs are in one
orbit, the $X$-pairs in another.

Deleting $x_{3m+1}$ (the last vertex) leaves an ordering considered in
Theorem~\ref{kmmm}.  There we counted $W$-pairs within that ordering.  There
were the same number of separated T-pairs of each type, except for those
hitting only two blocks.  Since each block $B_k$ appears in the order
$(x_k,y_k,z_k)$, each pair of blocks yields three separated $X$-pairs,
four $Y$-pairs, and three $Z$-pairs among the 10 $T$-pairs counted earlier.

We conclude that the ordering separates
$8\CH m4+11\CH m3+3\CH m2$ $X$-pairs and a total of
$16\CH m4+22\CH m3+7\CH m2$ $Y$-pairs and $Z$-pairs not involving $x_{3m+1}$.

Separated T-pairs involving $x_{3m+1}$ hit at most three earlier blocks.
Using one vertex each from $B_i$, $B_j$, and $B_k$ with $i<j<k$, we obtain 
$4\CH m3$ $X$-pairs and a total of $4\CH m3$ $Y$-pairs and $Z$-pairs.
Using $x_{3m+1}$ and vertices from $B_i$ and $B_j$, there
are $5\CH m2$ $X$-pairs, $2\CH m2$ $Y$-pairs and $\CH m2$ $Z$-pairs.  Using
$x_{3m+1}$ and all three vertices of $B_i$, we obtain one $X$-pair, since $x_i$
comes first, and no $Y$-pairs or $Z$-pairs.

%If the indices are $i,j,k,l$ with $1\le i < j < k < l \le m$, then the
%repeated part (one of $\{X,Y,Z\}$) is used in $B_i$ or $B_j$ and in $B_k$ or
%$B_l$.  There are two ways to assign the other parts to the remaining indices.
%Hence this yields $8\CH m4$ $X$-pairs and $16\CH m4$ $YZ$-pairs.
%If $x_0$ and vertices with three higher indices are used, then there are
%
%Suppose that the indices are $i,j,k$ with $1\le i<j<k\le m$, with index $r$
%repeated and part $W\in\{X,Y,Z\}$ repeated.  If $r\in\{i,k\}$, then $W$
%must appear with index $r$.  Picking another vertex from $B_r$ and picking the
%other index contributing part $W$.  yields $2\cdot 4\CH m3$ T-pairs of each
%type.  If $r=j$, then for any $W$ there are three ways to complete a pair
%separated by $\sigma$.  Hence we obtain $11\CH m3$ $X$-pairs and $22\CH m3$
%$YZ$-pairs.

%
%If only indices $i$ and $j$ are used, with $1\le i<j\le m$, then using all
%three vertices with index $i$ separates no $X$-pair but two $YZ$-pairs, using
%all three vertices with index $j$ separates one $X$-pair and one $YZ$-pair,
%and using two vertices with each index separates two $X$-pairs and four
%$YZ$-pairs.  The total is $3\CH m2$ $X$-pairs and $7\CH m2$ $YZ$-pairs.
%Using $x_0$ and three vertices with the same higher index yields $m$
%separated $X$-pairs and no separated $YZ$-pairs.

Summing these possibilities, we find that $\sigma$ separates
$8\CH m4+15\CH m3+8\CH m2+m$ of the $m^3(m+1)$ $X$-pairs and
$16\CH m4 + 26\CH m3+10\CH m2$ of the $2m^2(m^2-1)$ $Y$-pairs and $Z$-pairs. 
Remarkably, each ratio is exactly $\FR{2m+1}{6m}$.
\end{proof}

In Theorem~\ref{kmmm} we used more general orderings to simplify the
optimality argument.  Not needing that proof, here we used more restricted
orderings to simplify counting T-pairs.

\begin{theorem}\label{k1mm}
$\pi_f(K_{1,m,m}) = \FR{24m}{8m+5+3/(2\CL{m/2}-1)}$ for $m\ge1$.
%$\FR{24(m-1)m}{8m^2-3m-2}$ for even $m$ and
%$\FR{24m^2}{8m^2+5m+3}$ for odd $m$ (with $m>1$).
\end{theorem}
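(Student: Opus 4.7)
The plan is to adapt the orbit-and-counting framework of Theorems~\ref{theorem:balanced bipartite} through~\ref{km1mm}. Label the parts of $K_{1,m,m}$ as $\{v\}$, $X=\{\VEC x1m\}$, and $Y=\{\VEC y1m\}$. Under the automorphism group (independent permutations of $X$ and $Y$ together with the swap $X\leftrightarrow Y$), $\p$ splits into two orbits: the \emph{star-base pairs} $\{vx_i,x_jy_k\}$ with $i\ne j$, fused with $\{vy_j,x_iy_k\}$ (where $j\ne k$) by $X\leftrightarrow Y$, giving an orbit of size $2m^2(m-1)$; and the \emph{base-base pairs} $\{x_iy_j,x_ky_\ell\}$ with $i\ne k$ and $j\ne\ell$, of size $2\CH m2^2$. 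By Proposition~\ref{symmetry}, the pair player has an optimal strategy that is constant on each orbit.

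For the upper bound, I would define the ordering player's strategy by placing $v$ at the center of an ordering in which $X$ and $Y$ alternate on each side of $v$, averaged uniformly over the relabellings. For $m=2k$ use orderings in which each side of $v$ has $k$ vertices of $X$ and $k$ of $Y$ interleaved; for $m=2k-1$ (so $\CL{m/2}=k$) average over the two asymmetric interleaving patterns on each side. Since the restriction to $X\cup Y$ is an ordering of the type analyzed in Theorem~\ref{theorem:balanced bipartite}, the base-base rate is $\FR{m+1}{3m}$, comfortably exceeding the target. For star-base pairs, I would count separations by classifying each triple $(x_i,x_j,y_k)$ (or its $Y$-twin) by which side of $v$ each vertex lies on, and within a side by its position among the interleaved slots; the parity-dependent tally should yield rate $\FR{8m+5+3/(2\CL{m/2}-1)}{24m}$, with the term $\FR{3}{2\CL{m/2}-1}$ arising as the residual contribution of triples that straddle $v$.

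For the matching lower bound, let the pair player play only the star-base orbit uniformly. A local-swap argument modeled on the one at the end of Theorem~\ref{theorem:balanced bipartite}, with an additional case for swaps involving $v$, verifies that no vertex ordering separates more star-base pairs than those in the class used above; one checks that replacing a vertex immediately to one side of $v$ with its nearest same-part partner never decreases the star-base count, and similarly for swaps entirely within $X\cup Y$. The main obstacle will be the careful case analysis for the star-base count: the vertex $v$ breaks the $X$-$Y$ symmetry on at least one side of itself when $m$ is odd, forcing a mixture of two inequivalent interleaving patterns, and this mixture is what produces the $\CL{m/2}$ in the formula. Because the base-base rate already strictly exceeds the target, the two orbit-rates do not have to balance to a common value, which is the key structural difference from Theorems~\ref{theorem:balanced tripartite} and~\ref{theorem:mod1 tripartite} and is what allows the pair player to play star-base pairs alone.
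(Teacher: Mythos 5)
Your plan is essentially the paper's proof: the same two orbits (D-pairs and T-pairs), the same ordering strategy (the singleton vertex placed between paired $X$--$Y$ blocks as close to the middle as possible, so that the D-pair rate $\frac{m+1}{3m}$ strictly exceeds the T-pair rate), the pair player playing only the star-base orbit uniformly, and a local-exchange argument showing no other ordering separates more T-pairs. The only cosmetic difference is that the paper gets the $\CL{m/2}$ term by explicitly optimizing the number $k$ of blocks preceding the singleton (computing $f(k)-f(k-1)=m-2k+1$) rather than attributing it to parity of interleaving patterns, but that is exactly the computation you defer and it works out as you predict.
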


\begin{proof}
Let the parts be $X$, $Y$, and $Z$ with $X=\{x\}$.  Again we have D-pairs and
T-pairs, but the D-pairs all lie in $Y\cup Z$, and the T-pairs all use $x$
and are $Y$-pairs or $Z$-pairs, designated by the part contributing a vertex
to each edge.  The D-pairs lie in one orbit, as do the T-pairs, so by
Corollary~\ref{trans} some optimal strategy for the pair player makes
D-pairs equally likely and makes T-pairs equally likely.

Let $\sigma$ be a vertex ordering of the form $\VEC v1{2k},x,\VEC v{2k+1}{2m}$
such that each pair of the form $\{v_{2i-1},v_{2i}\}$ consists of one vertex
from each of $Y$ and $Z$, for $1 \le i \le m$.
We count the pairs separated by $\sigma$.  After optimizing over $k$, the
ordering player will make all orderings with that $k$ equally likely.

For all $k$, the restrictions of such orderings to $Y\cup Z$ are the orderings
used in Theorem~\ref{theorem:balanced bipartite}, which separate the fraction
$\FR{m+1}{3m}$ of the D-pairs, and no ordering separates more such pairs.

Index $Y$ as $\VEC y1m$ and $Z$ as $\VEC z1m$ in order in $\sigma$, so that
$\{v_{2i-1},v_{2i}\}=\{y_i,z_i\}$ for $1\le i\le m$.  Each T-pair separated by
$\sigma$ involves $x$.  For the edge $xw$, an edge separated from $xw$ by
the ordering is obtained by picking one vertex each from $Y$ and $Z$ that
are both on the opposite side of $x$ from $w$ or both on the opposite side of 
$w$ from $x$.  When $w\in\{y_j,z_j\}$ with $1\le j\le k$, taking the two
cases of $y_j$ and $z_j$ together yields $(j-1)(j-1+j)+2(m-k)^2$ pairs.
Summing over $j$ yields $2k(m-k)^2+\SE j1k \CH{2j-1}2$ pairs.  Similarly, 
summing over $k+1\le j\le m$ yields $2(m-k)k^2+\SE j1{m-k}\CH{2j-1}2$ pairs.

Let $f(k)$ be the sum of these two quantities, the total number of T-pairs
separated.  Note that $f(k)=2mk(m-k)+\SE j1k\CH{2j-1}2+\SE j1{m-k}\CH{2j-1}2$.
Letting $g(k)=f(k)-f(k-1)$, we have
$g(k)=2m(m-2k+1)+\CH{2k-1}2-\CH{2(m-k+1)-1}2$, which simplifies to
$m-2k+1$.  Thus $g(k)$ is a decreasing function of $k$.
Also, $g(\FR m2)>0$ and $g(\FR{m+1}2)=0$.  Hence the number of T-pairs is
maximized by choosing $k$ as the integer closest to $m/2$.

By induction on $k$, it is easily verified that
$\SE j1k\CH{2j-1}2=\FR16(4k+1)k(k-1)$.  Hence when $m$ is even and $k=m/2$,
our orderings separate $\FR m{12}(8m^2-3m-2)$ pairs.  When $m$ is odd,
they separate $\FR{m-1}{12}(8m^2+5m+3)$.  With altogether $2m^2(m-1)$
T-pairs, the ratio is $\FR{8m^2-3m-2}{24m(m-1)}$ when $m$ is even and
$\FR{8m^2+5m+3}{24m^2}$ when $m$ is odd.  Dividing numerator and denominator by
$m-1$ or $m$ yields the unified formula $\FR{8m+5+3/(2\CL{m/2}-1)}{24m}$ for
the fraction separated.

Note that the fraction of T-pairs separated is smaller than the fraction of
D-pairs separated.  It suffices to show that no ordering that does not
pair vertices of $Y$ and $Z$ and place $x$ between two pairs separates
the maximum number of $T$-pairs.  The pair player achieves equality in the
game by making the T-pairs equally likely.

%we obtain $2(m-k)k^2+\FR16(4m-4k+1)(m-k)(m-k-1)$ pairs.
%We count pairs based on the indices used for vertices from $Y\cup Z$.
%
%Pairs using three indices may have 0,1,2, or 3 vertices appearing after
%$x$. Based on the split, there is one index that contributes the vertex in
%an edge with $x$. This vertex may be from either $Y$ or $Z$. The other two
%indices form an edge in one of two ways. Thus, there are $4(\CH k3 + \CH k2
%(m-k) + \CH {m-k}2 k + \CH {m-k}3)$ such pairs.
%
%Pairs using two indices $i,j$ with $i<j$, may have 0,1,2, or 3 vertices
%appearing before $x$. If 0 vertices appear after $x$, then we may use
%either both $\{y_i,z_i\}$ or $\{y_j,z_j\}$. If both $y_i$ and $z_i$ are
%used, then either of $y_j$ or $z_j$ may be used to form a pair. Otherwise,
%the choice of $y_i$ and $z_i$ is determined by the order of $y_j$ and
%$z_j$. This contributes $3 \CH k3$ pairs. If 1 vertex appears after $x$,
%then that vertex, either $y_j$ or $z_j$ must be paired with $x$. This
%contributes $2 k(m-k)$ pairs. By symmetry we gain another $2k(m-k) + 3\CH
%{m-k}2$ pairs from a split with 2 or 3 vertices after $x$.
%
%In total $\sigma$ separates $4\CH k3 +4\CH{m-k}3 +4(\CH k2)(m-k)
%+4k\CH{m-k}2 +3\CH k2 +3\CH{m-k}2 +4k(m-k)$ T-pairs.  This sum
%equals $\FR{2}6 (4m^3+6km=6k^2-3m^2-m)$, which is maximized for $k =
%\lfloor m/2 \rfloor$. Altogether there are $2m^2(m-1)$ T-pairs,
%so the fraction of them separated is $\FR{8m^2-3m-2}{24(m-1)m}$ when $m$ is
%even and $\FR{8m^2-19m+9}{24m^2}$ when $m$ is odd, as desired.

Since we have considered all $k$, avoiding the specified form means that
some vertex in $Y\cup Z$ precedes another vertex with a lesser index or that
$x$ occurs between $y_i$ and $z_i$ for some $i$.
In the first case, we may assume that $y_j$ appears before $z_i$ with
$j>i$ and no vertex of $Y\cup Z$ between $y_j$ and $z_i$.
In the second case, we may assume by symmetry that $i<m$ and $y_i$ is
before $z_i$.  In either case, form $\sigma'$ from $\sigma$ by moving $z_i$ one
position earlier; this exchanges $z_i$ with $y_j$ or with $x$.

If $x$ appears before $y_j$ in $\sigma$, then $m-j$ T-pairs are separated in
$\sigma$ but not $\sigma'$, and $m-i$ T-pairs are separated in $\sigma'$ but
not $\sigma$.  If $x$ appears after $z_i$, then $i-1$ T-pairs are separated by
$\sigma$ but not $\sigma'$, and $j-1$ T-pairs are separated by $\sigma'$ but
not $\sigma$.  Since $j>i$, in each case $\sigma'$ separates more T-pairs.

In the remaining case, $x$ appears between $y_j$ and $z_i$ with $j\ge i$
and $i<m$.  Now $(i+j-1)(m-j)$ T-pairs are separated by $\sigma$ but not
$\sigma'$, and $(2m-i-j)j$ T-pairs separated by $\sigma'$ but not $\sigma$.
We have $(2m-i-j)j > (i+j-1)(m-j)$ when $j<m(j-i+1)$, which is true
when $i<j\le m$ and $i<m$.
\end{proof}

%It remains to consider orderings with vertices of $Y$ and $Z$ paired
%but $x$ between $y_i$ and $z_i$ for some $i$.  By symmetry, we may assume
%$i\le m/2$ and $y_i$ before $z_i$.  Again form $\sigma'$ from $\sigma$ by
%exchanging $z_i$ and $x$.  We count the T-pairs separated by exactly one of
%$\sigma$ and $\sigma'$. These must use $z_i$ and $x$ as endpoints of distinct
%edges. There are $(2i-1)(m-i)$ pairs separated by $\sigma$ but not $\sigma'$,
%and there are $(2m-2i)i$ pairs separated by $\sigma'$ but not $\sigma$.
%Since $i < m$, $\sigma'$ separates more pairs than $\sigma$.

\section{Circular Separation Dimension} \label{sec:circular}

Instead of considering linear orderings of the $V(G)$, we may consider circular
orderings of $V(G)$.  A pair of nonincident edges $\{xy,zw\}$ is
\emph{separated} by a circular ordering $\sigma$ if the endpoints of the two
edges do not alternate.  The {\it circular separation dimension} is the minimum
number of circular orderings needed to separate all pairs of nonincident edges
in this way.  The \emph{circular $t$-separation dimension} $\pi_t^\circ(G)$ is
the minimum size of a multiset of circular orderings needed to separate all
the pairs at least $t$ times.  The \emph{fractional circular separation
dimension} $\pi_f^\circ(G)$ is $\liminf_{t \to \infty} \pi_f^\circ(G)/t$. 

Like $\pi(G)$, also $\pi^\circ(G)$ is a hypergraph covering problem.
The vertex set $\p$ of the hypergraph $H$ is the same, but the edges
corresponding to vertex orderings of $G$ are larger.
Thus $\pi^\circ(G)\le\pi(G)$ and $\pi_f^\circ(G)\le \pi_f(G)$.

Before discussing the fractional problem, one should first determine the
graphs $G$ such that $\pi^\circ(G)$ (and hence also $\pi_f^{\circ}(G)$)
equals $1$.  Surprisingly, this characterization is quite easy.
Unfortunately, it does not generalize to geometrically characterize graphs
with $\pi^\circ(G)=t$ like the boxicity result in~\cite{BCGMR,BCGMR2}.

\begin{prop}\label{outer}
$\pi^\circ(G)=1$ if and only if $G$ is outerplanar.
\end{prop}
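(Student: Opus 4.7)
The plan is to identify a circular ordering of $V(G)$ with a placement of the vertices on a circle and to interpret edges as straight-line chords. The crucial geometric observation is that for two nonincident edges $ab$ and $cd$, the chords $ab$ and $cd$ cross in the interior of the disk if and only if one of $c,d$ lies on each of the two arcs between $a$ and $b$, which is exactly the condition that the endpoints alternate in the cyclic order, which is exactly the condition that the pair fails to be separated. (Chords sharing an endpoint meet only at that endpoint, so incident edges never cross in the interior.) Thus a single circular ordering $\sigma$ separates every pair of nonincident edges if and only if the corresponding chord diagram has no crossings.

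For the forward direction, suppose $\pi^\circ(G)=1$ and $\sigma$ is a witnessing ordering. Placing $V(G)$ on a circle in the cyclic order $\sigma$ and drawing each edge of $G$ as a chord yields a planar drawing of $G$ in the closed disk with every vertex on the bounding circle. Hence $G$ is outerplanar.

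For the reverse direction, I would invoke the standard fact that every outerplanar graph $G$ is a spanning subgraph of some maximal outerplanar graph $G^*$ on $V(G)$. The maximal outerplanar graph $G^*$ is a triangulation of a polygon; in particular it has a Hamilton cycle $C$ whose edges bound the outer face, and every other edge of $G^*$ is a non-crossing diagonal of $C$. Take $\sigma$ to be the cyclic ordering of $V(G)$ given by $C$. When the vertices are placed on a circle in this order, all edges of $G^*$ (hence of $G$) can be realized as pairwise non-crossing chords, so by the initial observation no nonincident pair in $G$ has alternating endpoints in $\sigma$. Thus $\pi^\circ(G)=1$.

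The only step requiring real care is the backward direction, specifically the claim that every outerplanar graph embeds into a maximal (hence Hamiltonian) outerplanar graph on the same vertex set. If $G$ is disconnected or has cut vertices, a direct reading of a planar embedding need not visit each vertex only once on the outer boundary. The standard remedy, and the one I would carry out, is either to quote the classical fact that maximal outerplanar graphs are triangulations of polygons and every outerplanar graph is a subgraph of such a triangulation, or to argue by induction on the block-cut tree: handle each block (a $2$-connected outerplanar graph, whose outer face is a Hamilton cycle, together with non-crossing chords, or a single edge) separately and splice the circular orders together at each cut vertex. Either way, the augmentation is the only nontrivial part of the argument; everything else is immediate from the crossing-versus-alternation dictionary above.
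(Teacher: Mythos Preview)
Your argument is correct and matches the paper's proof essentially line for line: both directions rest on the crossing-versus-alternation dictionary for chords, and for sufficiency the paper likewise passes to a maximal outerplanar supergraph (invoking monotonicity of $\pi^\circ$) and reads off the circular order from its outer spanning cycle. Your extra care about blocks and cut vertices is a reasonable elaboration, but the underlying strategy is the same.
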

\begin{proof}
When $\pi^\circ(G)=1$, the ordering provides an outerplanar embedding of $G$
by drawing all edges as chords.  Chords cross if and only if their endpoints
alternate in the ordering.

For sufficiency, it suffices to consider a maximal outerplanar graph, since
the parameter is monotone.  The outer boundary in an embedding is a spanning
cycle; use that as the vertex order.  All pairs in $\p$ are separated, since
alternating endpoints yield crossing chords.
\end{proof}

The lower bound $\pi(K_{m,n})\ge\log_2(\min\{m,n\})$ relies on the fact that
when two vertices of one part precede two vertices of the other, both
nonincident pairs induced by these four vertices fail to be separated.  In an
circular ordering, always at last one of the two pairs is separated.  This
leads to the surprising result that $\pi^\circ(G)\in\{1,2\}$ when $G$ is
bipartite.

\begin{prop}\label{bipartc}
$\pi^\circ(K_{m,n})=2$ when $m,n\ge2$ with $mn>4$.
\end{prop}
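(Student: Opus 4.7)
The plan is to prove the two inequalities separately. For the lower bound $\pi^\circ(K_{m,n})\ge 2$, the hypothesis $m,n\ge 2$ with $mn>4$ forces $K_{m,n}$ to contain $K_{2,3}$ as a subgraph, and $K_{2,3}$ is a classical forbidden subgraph for outerplanarity. Hence Proposition~\ref{outer} gives $\pi^\circ(K_{2,3})\ge 2$, and the monotonicity of $\pi^\circ$ (which, as for $\pi$, holds because enlarging a graph only enlarges the set of nonincident pairs needing separation) yields $\pi^\circ(K_{m,n})\ge 2$.

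For the upper bound, I would exhibit two circular orderings that together separate every pair of nonincident edges. Label the parts as $A=\{a_1,\ldots,a_m\}$ and $B=\{b_1,\ldots,b_n\}$, and let
\[
\sigma_1=(a_1,a_2,\ldots,a_m,b_1,b_2,\ldots,b_n),\qquad
\sigma_2=(a_1,a_2,\ldots,a_m,b_n,b_{n-1},\ldots,b_1),
\]
each viewed as a circular ordering. Given any pair $\{a_ib_j,a_kb_l\}$ of nonincident edges with $i<k$, the four endpoints in $\sigma_1$ appear in cyclic order $a_i,a_k,b_j,b_l$ when $j<l$ (so $a_ib_j$ occupies cyclic positions $\{1,3\}$ and $a_kb_l$ occupies $\{2,4\}$, the chords alternate, and the pair fails to be separated), and in cyclic order $a_i,a_k,b_l,b_j$ when $j>l$ (yielding nested chords and a separated pair). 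A symmetric computation in $\sigma_2$, where the $b$'s appear in decreasing index order, shows that $\sigma_2$ separates the pair if and only if $j<l$. Hence at least one of $\sigma_1,\sigma_2$ separates every nonincident pair, giving $\pi^\circ(K_{m,n})\le 2$.

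The construction is designed so that reversing $B$ in the second ordering turns every \emph{parallel} quadruple (same sign of $k-i$ and $l-j$) into a \emph{nested} one, and conversely. I do not anticipate any real obstacle: the case analysis on the sign of $l-j$ is short and essentially forced, and the monotonicity argument for the lower bound is immediate once $K_{2,3}$ is noted to be non-outerplanar.
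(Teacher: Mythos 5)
Your proof is correct and follows essentially the same route as the paper: the lower bound comes from non-outerplanarity (via Proposition~\ref{outer}), and the upper bound uses exactly the paper's construction of one circular ordering with the parts consecutive together with its copy in which one part is reversed, so that every nonincident pair is separated in exactly one of the two. Your explicit case analysis on the signs of $k-i$ and $l-j$ just spells out the paper's observation that a pair alternates in one ordering if and only if it does not alternate in the other.
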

\begin{proof}
The exceptions are the cases where $K_{m,n}$ is outerplanar and 
Proposition~\ref{outer} applies.  Let $\sigma$ be an circular ordering in which
each partite set occurs as a consecutive segment of vertices.  Obtain $\sigma'$
from $\sigma$ by reversing one of the partite sets.  A nonincident pair of 
edges alternates endpoints in $\sigma$ if and only if it does not alternate
endpoints in $\sigma'$.  Hence it is separated in exactly one of the two
orderings.
\end{proof}

Nevertheless, $\pi^\circ$ is unbounded.  It suffices to consider $K_n$, where a
classical result provides the lower bound.  A list of $d$-tuples is {\it
monotone} if in each coordinate the list is strictly increasing or weakly
decreasing.  The multidimensional generalization of the Erd\H{os}--Szekeres
Theorem by de~Bruijn states that any list of more than $l^{2^d}$ vectors in
$\RR^d$ contains a monotone sublist of more than $l$ vectors.  The result is
sharp, but this does not yield equality in the lower bound on $\pi^\circ(K_n)$.
Our best upper bound is logarithmic, from
$\pi(K_n)\le 4\log_{3/2}n$~\cite{BCGMR2}.

\begin{theorem}
$\pi^\circ(G)>\log_2\log_3(\omega(G)-1)$.
\end{theorem}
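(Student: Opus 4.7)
I plan to reduce to the case $G = K_n$ with $n = \omega(G)$ via monotonicity of $\pi^\circ$, which is immediate from its hypergraph-covering formulation: restricting circular orderings of $V(H)$ to $V(G) \esub V(H)$ still separates every nonincident pair of $G$, since deleting vertices cannot turn a non-alternating cyclic arrangement into an alternating one. It then suffices to prove $\pi^\circ(K_n) > \log_2 \log_3 (n - 1)$, and I will argue the contrapositive: if $d$ circular orderings $\sigma_1, \ldots, \sigma_d$ separate every pair of nonincident edges of $K_n$, then $n \le 3^{2^d}$.

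Given such orderings, I cut each circle at an arbitrary point to obtain linear positions $\sigma_i(v) \in \{1, \ldots, n\}$ and assign each vertex $v$ the position vector $p(v) = (\sigma_1(v), \ldots, \sigma_d(v)) \in \RR^d$. Applying the de Bruijn multidimensional Erd\H{o}s--Szekeres theorem cited just before the statement, with $l = 3$, whenever $n > 3^{2^d}$ there exist four vertices $v_1, v_2, v_3, v_4$ (in the order in which they appear in the list of vectors) whose position vectors are monotone in every coordinate.

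The key geometric step: in each $\sigma_i$, the monotonicity of $\sigma_i(v_1), \sigma_i(v_2), \sigma_i(v_3), \sigma_i(v_4)$ means that along the circle (read in either direction from the cut) the four vertices occur in cyclic order $(v_1, v_2, v_3, v_4)$---whether the linear sequence was increasing or decreasing, and regardless of where we cut. Consequently the ``crossing'' pair $\{v_1 v_3, v_2 v_4\}$ has alternating endpoints around every $\sigma_i$, so no $\sigma_i$ separates it. This contradicts the hypothesis, giving $n \le 3^{2^d}$, which rearranges to $\pi^\circ(K_n) \ge \log_2 \log_3 n > \log_2 \log_3 (n - 1)$, and monotonicity lifts this to $\pi^\circ(G) > \log_2 \log_3(\omega(G) - 1)$.

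The main obstacle is pinning down this geometric step cleanly: verifying that monotone linear positions in $\sigma_i$---in either direction, and for any choice of cut---always force the same cyclic arrangement (up to reflection) of $v_1, v_2, v_3, v_4$, so that the same crossing pair alternates in every $\sigma_i$. Once this is in hand, the conclusion follows mechanically from de Bruijn's theorem, monotonicity, and an elementary logarithmic rearrangement.
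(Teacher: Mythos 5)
Your proposal is correct and follows essentially the same route as the paper: assign each vertex its position vector across the $d$ cut circular orderings, apply de Bruijn's multidimensional Erd\H{o}s--Szekeres theorem with $l=3$ to extract four vertices monotone in every coordinate, and conclude that their crossing pair is never separated, forcing $n\le 3^{2^d}$. The step you flag as the main obstacle is immediate and is exactly how the paper argues it: cutting a circle at any point preserves the cyclic order of any four vertices, monotone positions mean they appear in the order $v_1,v_2,v_3,v_4$ or its reverse in each linear ordering, and alternation of the endpoints of $\{v_1v_3,\,v_2v_4\}$ is invariant under reversal, so no $\sigma_i$ separates that pair.
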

\begin{proof}
Note first that a set of circular orderings separates all pairs of 
nonincident edges in $K_n$ if and only if every $4$-set appears cyclically
ordered in more than one way (not counting reversal).  This follows because
each cyclic ordering of $K_4$ alternates endpoints of exactly one pair of
nonincident edges, and for the three cyclic orderings (unchanged under
reversal) the pairs that alternate are distinct.

Consider $d$ circular orderings of $\{\VEC v1n\}$.  Write them linearly by
starting with $v_1$.  Associate with each $v_i$ a vector $w_i$ in $\RR^d$ whose
$j$th coordinate is the position of $v_i$ in the $j$th linear ordering.  If
$n>3^{2^d}$, then by the multidimensional generalization of the
Erd\H{o}s--Szekeres Theorem $\VEC w1n$ has a monotone sublist of four elements.
The four corresponding vertices $x_1,x_2,x_3,x_4$ appear in increasing order or
in decreasing order in each linear order.  Hence they appear in the same cyclic
order or its reverse in each of the original circular orderings.  In
particular, $x_1x_3$ and $x_2x_4$ are not separated by these circular
orderings.  Since we considered any $d$ circular orderings, $\pi^\circ(K_n)>d$
when $n=3^{2^d}+1$.
\end{proof}

We next turn to the fractional context.  Since $\pi^\circ(G)$ is a hypergraph
covering problem, again and $\pi_f^\circ$ is computed from a matrix game, with
each row provided by the set of pairs in $\p$ separated by a circular orderinge

Our earlier results have analogues in the circular context.  A circular
ordering of four vertices separates two of the three pairs instead of one,
which improves some bounds by a factor of $2$.  The characterization of the
extremal graphs then mirrors the proof of Theorem~\ref{theorem:K4free}.

\begin{theorem}\label{circext}
$\pi_f^\circ(G)\le \FR32$, with equality if and only if $K_4\esub G$.
Furthermore, if $G$ has $n$ vertices and $K_4\esub G$,
then $\pi_f^\circ(G)\le\FR32\left(1-\FR6{n^4}+O(\FR1{n^5})\right)$.
\end{theorem}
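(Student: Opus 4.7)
The plan is to adapt the proofs of Theorems~\ref{bound} and~\ref{theorem:K4free} from linear to circular orderings. For the first claim, $\pi_f^\circ(G)\le\FR32$, I would let the ordering player pick each circular ordering of $V(G)$ uniformly. For any pair of nonincident edges $\{ab,cd\}$, the induced cyclic order on $\{a,b,c,d\}$ is uniform over the six cyclic classes, and exactly four of them separate $\{ab,cd\}$ (the two ``alternating'' classes, in which $a,b$ alternate around the cycle with $c,d$, are the only failures). Hence the separation probability is $\FR23$, and Proposition~\ref{game} gives $\pi_f^\circ(G)\le\FR32$.

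For equality when $K_4\esub G$, within any copy of $K_4$ every circular ordering alternates exactly one of the three nonincident pairings and separates the other two. A list of $s$ orderings therefore supplies at most $2s$ separations among these three pairs, forcing $s\ge\FR{3t}2$ to separate each pair at least $t$ times; hence $\pi_t^\circ(G)\ge\FR{3t}2$ and $\pi_f^\circ(G)\ge\FR32$.

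For the quantitative improvement when $K_4\not\esub G$, I would mirror the modification in the proof of Theorem~\ref{theorem:K4free}. Pick any $4$-set $\{a,b,c,d\}\esub V(G)$; the induced subgraph is not $K_4$, so by relabeling we may assume $ac\notin E(G)$. For each ordering $\rho$ of the remaining $n-4$ vertices, the $24$ circular orderings placing $\{a,b,c,d\}$ in positions $1$--$4$ followed by $\rho$ get re-weighted, shifting weight onto the two cyclic classes $(a,b,c,d)$ and $(a,d,c,b)$---the two cyclic classes that separate both $\{ab,cd\}$ and $\{ad,bc\}$---and away from the other four. This increases the separation counts of $\{ab,cd\}$ and $\{ad,bc\}$, and the only decrease is in the separation of $\{ac,bd\}$, which is not a pair of edges since $ac\notin E(G)$.

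The main obstacle, as in Theorem~\ref{theorem:K4free}, is to verify that the re-weighting leaves the separation count unchanged for every pair of nonincident edges with at most three vertices in $\{a,b,c,d\}$. I would choose the re-weighting to be invariant under reversing the cyclic order of $\{a,b,c,d\}$ (dihedral symmetry); then for every pair $\{u,w\}\esub\{a,b,c,d\}$, the total weight on arrangements with $u$ preceding $w$ in the $4$-block equals that with $w$ preceding $u$, matching the uniform distribution and preserving separations for pairs with two vertices in $\{a,b,c,d\}$. The same dihedral argument applied to any $3$-element subset of $\{a,b,c,d\}$ preserves each of its two cyclic orders and thereby preserves separations for pairs with three vertices in $\{a,b,c,d\}$. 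Aggregating over all $\binom{n}{4}$ modifications---each ordering is modified by exactly one, namely the $4$-set in its first four positions---yields a valid probability distribution with separation probability $\FR23+\Theta(\FR1{n^4})$ for every pair of nonincident edges in $G$, and inverting gives the claimed asymptotic bound.
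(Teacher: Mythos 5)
Your treatment of the bound $\pi_f^\circ(G)\le\frac32$ (uniform circular orderings, separation probability $\frac23$) and of the lower bound when $K_4\esub G$ (each circular ordering alternates exactly one of the three pairings in a $K_4$) is correct and essentially identical to the paper's. The gap is in the quantitative improvement for $K_4$-free graphs, and it sits exactly at what you yourself call the main obstacle: pairs with three vertices in the chosen $4$-set $\{a,b,c,d\}$. For such a pair, consisting of an induced edge $uv$ and an edge $te$ with $t\in\{a,b,c,d\}$ and $e\notin\{a,b,c,d\}$, separation in the circular order is equivalent to $t$ \emph{not} lying strictly between $u$ and $v$ in the linear order of the $4$-block (since $e$ lies in the complementary arc). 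This betweenness statistic is invariant under reversing the block, so imposing reversal (dihedral) invariance on your re-weighting gives no control over it whatsoever; reversal invariance only equalizes before/after counts, which is what you need for pairs with \emph{two} vertices in the set, not three. Concretely, the most natural instance of your scheme --- putting the weight uniformly on the eight block arrangements lying in the two cyclic classes $(a,b,c,d)$ and $(a,d,c,b)$, which is reversal-invariant --- separates the pair $\{bd,ae\}$ in only $12$ of the $24$ weight units, versus $16$ under the uniform distribution (in these classes the chord $bd$ separates $a$ from $c$, so $a$ ends up between $b$ and $d$ half the time once the cycle is cut at the block boundary). Such a pair suffers this loss in every group attached to a $4$-set containing $\{a,b,d\}$ but not $e$ (of which there are on the order of $n$, each contributing $(n-4)!$ groups), while it gains only within the single $4$-set equal to its own support; so the construction as justified does not give every pair separation probability at least $\frac23+\Theta(n^{-4})$, and may even push some pairs below $\frac23$.

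The paper closes exactly this point not by a symmetry argument but by exhibiting an explicit multiset of $24$ block arrangements (two copies each of $abdc$, $badc$, $dcba$, $cbad$, $adbc$, $adcb$, $acbd$, $dbac$ and four copies each of $cdab$, $bcda$) and verifying by inspection that it separates $\{ab,cd\}$ and $\{ad,bc\}$ at least $20$ times each, leaves every before/after count at $12$ (so two-vertex pairs are unchanged), and keeps every three-vertex pair at the uniform level of at least $16$; note this list deliberately keeps some weight outside the two ``good'' cyclic classes. Your bookkeeping for aggregating over all $\binom n4$ sets (viewing the $n!$ linear orderings as circular ones, each modified according to its first four positions) and the resulting asymptotics are fine, but you need either an explicit verified list of this kind or a different argument that genuinely controls the betweenness counts; the dihedral-invariance step as stated is false.
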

\begin{proof}
A circular ordering separates two of the three pairs in each set of four
vertices, so making all circular orderings of $n$ vertices equally likely
yields $\pi_f^\circ(G)\le \FR32$.  Equality holds when $K_4\esub G$, since the
pair player can give probability $\FR13$ to each pair of nonincident edges in a
copy of $K_4$.

Now suppose $K_4\not\esub G$.  Let $p=\FR23+\FR{4(n-4)!}{n!}$.  We provide a
distribution on the circular orderings of $V(G)$ such that each nonincident
pair of edges is separated with probability at least $p$.  We create a list of
$n!$ linear orderings of $V(G)$, which we view as $n!$ circular orderings.

Consider $S=\{a,b,c,d\}\esub V(G)$.  For each ordering $\rho$ of the remaining
$n-4$ vertices, $24$ orderings begin with $S$ and end with $\rho$.  By
symmetry, we may assume $ac\notin E(G)$.  Thus the possible pairs of
nonincident edges induced by $S$ are $\{ab,cd\}$ and $\{ad,bc\}$.  We increase
the separation probability for these vertex pairs.

Circular separation includes nesting when written linearly; only alternation of
endpoints fails.  The pairs $\{ab,cd\}$ and $\{ad,bc\}$ are each separated $16$
times in the $24$ orderings of $S$ followed by $\rho$.  The new $24$ orderings
will separate $\{ab,cd\}$ and $\{ad,bc\}$ each at least $20$ times and any
other pair (not including $\{a,c\}$) at least $16$ times.

The $24$ new orderings are two copies each where the first four vertices are
(in order) $abdc$, $badc$, $dcba$, $cbad$, $adbc$, $adcb$, $acbd$, or $dbac$,
and four copies each using $cdab$ or $bcda$, always followed by $\rho$.  By
inspection, each of $\{ab,cd\}$ and $\{ad,bc\}$ is separated $20$ times in the
list.

The number of orderings that separate any pair of nonincident edges having at
most two vertices in $S$ is the same as before.  Hence we need only check pairs
with three vertices in $S$, consisting of one edge in $\{ab,cd,bc,ad,bd\}$
(never $ac$) and another edge with one endpoint among the remaining two
vertices in $S$.  In each case, the endpoints of the induced edge appear before
or after the third vertex in at least $16$ of the orderings in the new list of
$24$.

Since $\{a,b,c,d\}$ is arbitrary and we do this for each $4$-set, the pairs
$\{ab,cd\}$ and $\{ad,bc\}$ are separated with probability at least $\FR56$ by
the $24$ orderings that start with $\{a,b,c,d\}$ and then are made circular,
and with probability at least $\FR23$ among the remaining orderings.  Thus the
separation probability increases from $\FR23$ to at least $p$ for each pair.
\end{proof}

Again there is no sharper bound for bipartite graphs or graphs with girth $4$:
$\pi_f^\circ(K_{m,m})\to\FR 32$.  The orderings used to give the optimal upper
bound for $\pi_f^\circ(K_{m,qm})$ are in some sense the farthest possible from
those giving the optimal upper bound for $\pi^\circ(K_{m,qm})$ in
Proposition~\ref{bipartc}.

\begin{theorem}\label{bipartfc}
$\pi_f^\circ(K_{m,qm}) = \FR{6(qm-1)}{4mq+q-3}$.  In particular,
$\pi_f^\circ(K_{m,m})=\FR{3m-3}{2m-1}$.
\end{theorem}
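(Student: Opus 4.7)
The plan is to mirror the strategy used for Theorem~\ref{theorem:complete bipartite} while exploiting the key difference highlighted in Theorem~\ref{circext}: a cyclic ordering of any $4$-set separates two of its three nonincident pairs rather than one. All pairs of nonincident edges in $K_{m,qm}$ lie in a single orbit of its automorphism group, so the circular analogue of Corollary~\ref{trans} reduces the problem to computing $P/R$, where $P=2\binom{m}{2}\binom{qm}{2}$ is the total number of nonincident pairs and $R$ is the maximum number of such pairs separated by a single circular ordering. I will then verify that $P/R$ simplifies to $\FR{6(qm-1)}{4mq+q-3}$.

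For $R$, I classify $4$-subsets of $V(K_{m,qm})$ with two vertices in each part. The two nonincident pairs carried by such a quadruple are the two bipartite matchings among the four vertices. Direct case analysis of the three cyclic arrangements of $4$ labelled points shows that if the four vertices alternate as $XYXY$ cyclically then both matchings are separated, while otherwise exactly one is. Hence the number of pairs separated by a circular ordering $\sigma$ equals $\binom{m}{2}\binom{qm}{2} + A(\sigma)$, where $A(\sigma)$ counts the \emph{alternating quadruples}. List the $X$-vertices as $x_1,\ldots,x_m$ in cyclic order along $\sigma$, and let $a_k$ be the number of $Y$-vertices between $x_k$ and $x_{k+1}$ (indices mod $m$, so $\sum_k a_k = qm$). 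For each unordered pair $\{x_i,x_j\}$, the two arcs it bounds contain $A_{ij}$ and $qm-A_{ij}$ vertices of $Y$, and alternating quadruples for this pair are obtained by picking one $Y$-vertex from each arc, contributing $A_{ij}(qm-A_{ij})$ to $A(\sigma)$.

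The heart of the proof is to show that $A(\sigma) = \sum_{i<j} A_{ij}(qm-A_{ij})$ is maximized exactly when $a_k = q$ for every $k$. As a sum of concave functions of linear functionals of $(a_1,\ldots,a_m)$, this is a concave function on the simplex $\{a\ge0\st\sum_ka_k=qm\}$; because its value is a geometric invariant of the cyclic gap sequence, it is also invariant under cyclic shifts of $(a_1,\ldots,a_m)$. Averaging over the $m$ cyclic shifts yields the uniform vector $(q,\ldots,q)$, which is integer-feasible since $q\in\NN$, so Jensen's inequality gives the claim. A routine calculation then delivers $A_{\max} = q^2\sum_{d=1}^{m-1}(m-d)\cdot d(m-d) = \FR{m^2q^2(m^2-1)}{12}$, whence $R = \binom{m}{2}\binom{qm}{2}+\FR{m^2q^2(m^2-1)}{12}$ and $R/P = \FR{4mq+q-3}{6(qm-1)}$.

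The matching lower bound on $\pi_f^\circ$ comes at once: the ordering player plays the uniform distribution on all circular orderings with gap pattern $a_k\equiv q$, and because this distribution is invariant under $\mathrm{Aut}(K_{m,qm})$, every pair of nonincident edges is separated with the same probability $R/P$ against any pair-player strategy. The main obstacle will be proving the symmetric maximization step cleanly; as a fallback I will instead imitate the swap argument at the end of Theorem~\ref{theorem:complete bipartite}, showing via the first-difference identity $f(A-1)-f(A) = 2A-qm-1$ that shifting one $Y$-vertex between adjacent gaps strictly increases the separated count whenever the gap pattern is not already uniform.
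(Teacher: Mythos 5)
Your proposal is correct, and its key step differs from the paper's in an interesting way. Both arguments start identically: all nonincident pairs of $K_{m,qm}$ form one orbit, so (the circular analogue of) Corollary~\ref{trans} reduces everything to computing the maximum number $R$ of pairs separated by a single circular ordering, and both identify the optimum as the orderings with uniform gap pattern ($q$ vertices of $Y$ between consecutive $X$-vertices). Your counting is organized differently but equivalently: the paper counts the two cyclic types $XYXY$ and $XYYX$ directly, while you observe that every quadruple with two vertices per part contributes one separated pair, plus one extra exactly when it alternates, giving $\binom m2\binom{qm}2+A(\sigma)$; your closed form $A_{\max}=\FR{m^2q^2(m^2-1)}{12}$ and the resulting ratio $\FR{4mq+q-3}{6(qm-1)}$ agree with the paper's computation. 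The genuine divergence is the optimality proof. The paper uses a local-exchange argument: for a non-uniform ordering it considers swapping $x_m$ with the following $Y$-vertex and $x_1$ with the preceding one, and shows the sum of the two net gains is positive, so some exchange strictly improves. You instead note that $A(\sigma)=\sum_{i<j}A_{ij}(qm-A_{ij})$ is a concave function of the gap vector (a sum of concave quadratics in linear arc-sums), is invariant under cyclic shifts, and hence by averaging the $m$ shifts and Jensen is maximized at the uniform vector $(q,\dots,q)$, which is integral since $q\in\NN$. This is a complete, global, and arguably cleaner argument than the paper's exchange computation, so your fallback swap sketch is not needed. One small slip in wording: the uniform-gap ordering strategy gives the \emph{upper} bound $\pi_f^\circ\le P/R$ (equivalently a lower bound on the game value), while the lower bound $\pi_f^\circ\ge P/R$ comes from the pair player's uniform strategy together with your maximization claim; since you invoke the transitivity corollary, both directions are covered, but the sentence attributing the ``matching lower bound'' to the ordering player has the roles reversed.
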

\begin{proof}
Again Corollary~\ref{trans} (for the circular separation game) applies.  The
$2\CH m2\CH{qm}2$ pairs of nonincident edges lie in one orbit, so it suffices
to make circular orderings that separate $\FR{4mq+q-3}{6(qm-1)}2\CH m2\CH{qm}2$
pairs equally likely and show that no ordering separates more.

Let $X$ and $Y$ be the parts of the bipartition, with $|X|=m$.  Let $\sigma$
be a circular ordering in which the vertices of $X$ are equally spaced, with
$q$ vertices of $Y$ between any two successive vertices of $X$.

There are two types of pairs separated by $\sigma$.  In one, the parts
for the four vertices alternate as $XYXY$; in the other, they occur as
$XYYX$, cyclically.  Choose the first member of $X$ in $m$ ways.  Let $k$ be
the number of steps within $X$ taken to get from there to the other member of
$X$ used.  In the first case, there are $kq(m-k)q$ ways to choose the vertices
from $Y$ and two ways to group the chosen vertices to form a separated
nonincident pair, but either of the vertices of $X$ could have been called the
first vertex.  In the second case, there are $\CH{kq}2$ ways to choose from
$Y$, one way to group, and only one choice for the first vertex of $X$.

Thus to count the separated pairs, we sum over $k$ and use
$\SE k{-n}m\CH{n+k}r\CH{m-k}s=\CH{n+m+1}{r+s+1}$ and
$\SE k1n k^2=\FR16 n(n+1)(2n+1)$ to compute
\begin{align*}
m\SE k1{m-1} kq(m-k)q + m\SE k1{m-1}\CH{kq}2
&=m\SE k0m q^2\CH{0+k}1\CH{m-k}1+\FR{mq}2\SE k1{m-1}(k^2q-k)\\
&=mq^2\CH{m+1}3+\FR{mq^2}2\FR{(m-1)m(2m-1)}6-\FR{mq}2\CH m2.
\end{align*}
Factoring out $2\CH m2\FR{qm}2$ leaves $\FR16(4mq+q-3)$, as desired.

It remains to show that no other circular ordering separates as many pairs of
nonincident edges.  We do this by finding, for every circular ordering $\sigma$
other than those discussed above, an ordering $\hat\sigma$ that separates more
pairs.

With $X=\{\VEC x1m\}$ in cyclic order, the ordering $\sigma$ is
described by a list $\VEC q1m$ of nonnegative integers summing to $qm$, where
$q_i$ is the number of vertices of $Y$ between $x_{i-1}$ and $x_i$ (indexed
modulo $m$).  Index so that $q_1=\max_i q_i$; we may assume $q_1\ge q+1$.

Let $\sigma'$ be the ordering obtained by interchanging $x_m$ with the vertex
$y$ immediately following it (note that $y\in Y$, since $q_1>q$).  The pairs in
$\p$ separated by $\sigma$ or $\sigma'$ but not both are those consisting of an
edge $yx_k$ for some $k$ with $1\le k\le m-1$ and an edge $x_my'$.  For those
separated by $\sigma$ but not $\sigma'$ there are $\SE j{k+1}m q_j$ choices for
$y'$.  For those separated by $\sigma'$ but not $\sigma$ there are
$(\SE i1k q_i)-1$ choices for $y'$.

After isolating the terms involving $q_1$, the net gain in switching from
$\sigma$ to $\sigma'$ is thus
$$\SE k1{m-1}\left(q_1-1+\SE i2k q_i-\SE j{k+1}m q_j\right).$$
%In the double sum, a particular term $q_i$ appears positively whenever $k$ is
%at least $i$ (thus $m-i$ times, since $k<m$), and $q_j$ appears negatively
%whenever $k$ is less than $j$ (thus $j-1$ times).  Thus the sum equals
%$\sum_{1\le i<j\le m} (q_i-q_j)$.

Consider instead the ordering $\sigma''$ obtained from $\sigma$ by
interchanging $x_1$ with the vertex $y$ immediately preceding it (again
$y\in Y$, since $q_1>q$.  The net change in the number of separated pairs
follows the same computation, except that $\VEC q2m$ are indexed in the reverse
order.  More precisely, the change in moving from $\sigma$ to $\sigma''$ is
$$\SE k2m\left(q_1-1+\SE j{k+1}m q_j-\SE i2k q_i\right).$$
%This time,

In summing the two net changes, the summations in the terms for $2\le k\le m-1$
cancel.  The sum is thus
$$
2(q_1-1)(m-1)-\SE j2m q_j-\SE i2m q_i.
$$
Since $\SE j2m q_j=qm-q_1$, the net sum simplifies to $2q_1m-2qm-2(m-1)$.
Since $q_1\ge q+1$, the value is at least $2$.  Since the sum of the two net
changes is positive, at least one of them is positive, and $\sigma$ does
not separate the most pairs.
\end{proof}

Note that $K_{2,r}$ is planar with girth $4$, for $r\ge2$.
Theorem~\ref{bipartfc} yields $\pi_f^\circ(K_{2,2q})=\FR{4q-4}{3q-1}\to \FR43$.
It remains open how large $\pi_f^\circ$ can be for planar graphs with girth $4$,
and for graphs (planar or not) with larger girth.  For girth $5$, computer
computation shows that the fractional circular separation dimension of the
Petersen graph is $\FR87$.

\end{document}